\numberwithin{equation}{section}
\def \dis {\displaystyle}
\def \confai {-\kern -.5em\rightharpoonup}
\def \cqfd {\hfill$\Box$}
\def \div{\mbox{\rm div}\,}
\def \curl{\mbox{\rm curl}\,}
\def \al {\alpha}
\def \de {\delta}
\def \De {\Delta}
\def \om {\omega}
\def \Om {\Omega}
\def \ph {\varphi}
\def \si {\sigma}
\def \NN {\mathbb N}
\def \ZZ {\mathbb Z}
\def \RR {\mathbb R}
\def \D {\mathscr{D}}
\def \beq {\begin{equation}}
\def \eeq {\end{equation}}
\def \ba {\begin{array}}
\def \ea {\end{array}}
\def \bs {\bigskip}
\def \ms {\medskip}
\def \ss {\smallskip}
\def \ecart {\noalign{\medskip}}
\newtheorem{Thm}{Theorem}[section]
\newtheorem{Cor}[Thm]{Corollary}
\newtheorem{Pro}[Thm]{Proposition}
\newtheorem{Adef}[Thm]{Definition}
\newenvironment{Def}{\begin{Adef}\rm}{\end{Adef}}
\newtheorem{Arem}[Thm]{Remark}
\newenvironment{Rem}{\begin{Arem}\rm}{\end{Arem}}
\newtheorem{Aappl}[Thm]{Application}
\newtheorem{Aexa}[Thm]{Example}
\newenvironment{Exa}{\begin{Aexa}\rm}{\end{Aexa}}
\newtheorem{Anot}[Thm]{Notation}
\def \refe #1.{(\ref{#1})}
\def \reff #1.{figure~\ref{#1}}
\def \refs #1.{Section~\ref{#1}}
\def \refss #1.{Subsection~\ref{#1}}
\def \refD #1.{Definition~\ref{#1}}
\def \refT #1.{Theorem~\ref{#1}}
\def \refL #1.{Lemma~\ref{#1}}
\def \refC #1.{Corollary~\ref{#1}}
\def \refP #1.{Proposition~\ref{#1}}
\def \refPt #1.{Properties~\ref{#1}}
\def \refR #1.{Remark~\ref{#1}}
\def \refA #1.{Application~\ref{#1}}
\def \refE #1.{Example~\ref{#1}}
\def \refN #1.{Notation~\ref{#1}}
\newcounter{marnote}
\title{Isotropic realizability of current fields in $\RR^3$}
\author{M. Briane\footnote{INSA de Rennes, IRMAR (CNRS, UMR 6625), FRANCE -- mbriane@insa-rennes.fr}
\and G.W. Milton\footnote{Department of Mathematics, University of Utah, USA -- milton@math.utah.edu}
}
\begin{document}
\maketitle
%\tableofcontents
\begin{abstract}
This paper deals with the isotropic realizability of a given regular divergence free field $j$ in $\RR^3$ as a current field, namely to know when $j$ can be written as $\si\nabla u$ for some isotropic conductivity $\si>0$, and some gradient field $\nabla u$. The local isotropic realizability in $\RR^3$ is obtained by Frobenius' theorem provided that $j$ and $\curl j$ are orthogonal in~$\RR^3$. A counter-example shows that Frobenius' condition is not sufficient to derive the global isotropic realizability in $\RR^3$. However, assuming that $(j,\curl j,j\times\curl j)$ is an orthogonal basis of $\RR^3$,  an admissible conductivity $\si$ is constructed from a combination of the three dynamical flows along the directions $j/|j|$, $\curl j/|\curl j|$ and $(j/|j|^2)\times\curl j$. When the field $j$ is periodic, the isotropic realizability in the torus needs in addition a boundedness assumption satisfied by the flow along the third direction $(j/|j|^2)\times\curl j$. Several examples illustrate the sharpness of the realizability conditions.
\end{abstract}
\noindent
{\bf Keywords:} current field, isotropic conductivity, Frobenius' condition, dynamical flow
\par\bs\noindent
{\bf Mathematics Subject Classification:} 35B27, 78A30, 37C10
\section{Introduction}
In the theory of composite conductors (see, {\em e.g.}, \cite{Mil}), we are naturally led to study periodic composites. The effective properties of a periodic composite are obtained by passing from a local Ohm's law
\beq\label{Ohm}
j=\si\,e,
\eeq
between a periodic divergence free current field $j$ and a periodic electric field $e$, to an effective Ohm's law
\beq
\langle j\rangle=\si^*\langle e\rangle,\quad\mbox{with $\langle\cdot\rangle$ the average over the period cell},
\eeq
where $\si(y)$ is the local conductivity which is isotropic, and $\si^*$ is the (constant) effective conductivity of the composite which is in general anisotropic. In this context, it is natural to characterize the periodic current fields arising in the solution of these equations among all the divergence free fields. More precisely, the paper deals with the following question: Given a periodic regular divergence free field $j$ from $\RR^3$ into $\RR^3$, under which conditions $j$ is an isotropically realizable current field, namely there exists an isotropic conductivity $\si>0$ and a gradient field such that $j=\si\nabla u$?
An additional motivation comes from the success of transformation optics (see, {\em e.g.}, \cite{PSS1,PSS2}) where the objective is to choose moduli (in our case the conductivity~$\sigma$) to achieve desired fields (in our case the prescribed current field $j$).
\par
In \cite{BMT} we have studied the isotropic realizability of a given regular electric field $e=\nabla u$ in~$\RR^d$, for any $d\geq 2$. The key ingredient of our approach was the associated gradient system 
\beq\label{X}
\left\{\ba{ll}
X'(t,x)=\nabla u\big(X(t,x)\big), & \mbox{for }t\in\RR,
\\ \ecart
X(0,x)=x, &
\ea\right.
\eeq
which allowed us to prove the following isotropic realizability result for a gradient field in the whole space and in the torus:
\begin{Thm}[\cite{BMT}, Theorems~2.15 \& 2.17]\label{thm.elerea}
Let $u$ be a function in $C^3(\RR^d)$ satisfying the non-vanishing condition
\beq\label{Du�0}
\inf_{\RR^d}|\nabla u|>0.
\eeq
Then, there exists a unique function $\tau\in C^1(\RR^d)$ such that for any $x\in\RR^d$, the trajectory $t\mapsto X(t,x)$ meets the equipotential $\{u=0\}$ at the times $\tau(x)$, namely
\beq\label{tau(x)}
u\big(X(\tau(x),x)\big)=0.
\eeq
Moreover, the positive function $\si$ defined in $\RR^d$ by
\beq\label{sitau}
\si(x):=\exp\left(\int_0^{\tau(x)}\De u\big(X(s,x)\big)\,ds\right),\quad\mbox{for }x\in\RR^d,
\eeq
satisfies the conductivity equation $\div(\si\nabla u)=0$ in $\RR^d$.
\par
On the other hand, when $\nabla u$ is periodic, the conductivity $\si$ can be chosen periodic if and only if there exists a constant $C>0$ such that
\beq\label{bdDeu}
\left|\,\int_0^{\tau(x)}\De u\big(X(t,x)\big)\,dt\,\right|\leq C,\quad \forall\,x\in\RR^d.
\eeq
\end{Thm}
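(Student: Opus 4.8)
The plan is to exploit the single identity $\frac{d}{dt}u(X(t,x))=|\nabla u(X(t,x))|^2$, which drives the whole argument. Setting $m:=\inf_{\RR^d}|\nabla u|>0$, this shows $t\mapsto u(X(t,x))$ is strictly increasing with slope $\geq m^2$; combined with the Cauchy--Schwarz length estimate $\int_0^{T}|\nabla u(X)|\,dt\le\sqrt{T}\,(u(X(T,x))-u(x))^{1/2}$, it forces each trajectory to reach the level set $\{u=0\}$ exactly once and in finite time, the finite path length ruling out blow-up before the hitting time. Hence $\tau(x)$ is well defined, and applying the implicit function theorem to $F(t,x):=u(X(t,x))$, for which $\partial_tF=|\nabla u(X)|^2\neq0$, gives $\tau\in C^1(\RR^d)$, since $F\in C^2$ because $u\in C^3$ and the flow $X$ is $C^2$ in $x$.

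Next I would record the cocycle relation $\tau(X(s,x))=\tau(x)-s$, immediate from the group law $X(t,X(s,x))=X(t+s,x)$ and the uniqueness of the hitting time. Writing $v(x):=\int_0^{\tau(x)}\De u(X(t,x))\,dt$, this relation and the change of variable $r=t+s$ yield $v(X(s,x))=\int_s^{\tau(x)}\De u(X(r,x))\,dr$; differentiating at $s=0$, with the upper limit $\tau(x)$ frozen, gives $\nabla v(x)\cdot\nabla u(x)=-\De u(x)$. Therefore $\si:=e^{v}>0$ satisfies $\div(\si\nabla u)=\si(\nabla v\cdot\nabla u+\De u)=0$, which is the first assertion, with the stated formula for $\si$.

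For the periodic statement, the necessity of \eqref{bdDeu} is the easy direction. If a periodic $\si>0$ solves $\div(\si\nabla u)=0$, then $w:=\log\si$ is periodic, hence bounded, and satisfies $\frac{d}{dt}w(X(t,x))=-\De u(X(t,x))$ along the flow; integrating from $0$ to $\tau(x)$ gives $\int_0^{\tau(x)}\De u(X(t,x))\,dt=w(x)-w(X(\tau(x),x))$, whose modulus is at most $2\|w\|_\infty$.

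The sufficiency is where the real work lies, and I expect it to be the main obstacle. Taking $v$ as in the first part and setting $s=T$ in the formula $v(X(s,x))=\int_s^{\tau(x)}\De u(X(r,x))\,dr$ gives $\int_0^{T}\De u(X(t,x))\,dt=v(x)-v(X(T,x))$ for every $T$; hence \eqref{bdDeu} is equivalent to a bound, uniform in $T$ and $x$, on the ergodic integrals of $\De u$ along the flow. Since $\nabla u$ is periodic it descends to $\TT^3$, where $\De u$ has zero mean, and since a smooth function with non-vanishing gradient cannot be periodic we have $\langle\nabla u\rangle\neq0$, so the flow drifts and the level sets of $u$ serve as global cross-sections. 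I would then run a Gottschalk--Hedlund-type argument: the uniform bound on the ergodic integrals lets one correct $v$ by a first integral of the flow, a function constant along trajectories, so as to produce a genuinely periodic solution of $\nabla v\cdot\nabla u=-\De u$, after which $\si:=e^{v}$ is the sought periodic conductivity. The delicate point is to solve this coboundary equation with a continuous periodic transfer function rather than a merely bounded one, and this is precisely where the full strength of \eqref{bdDeu}, and not just the vanishing mean of $\De u$, is used.
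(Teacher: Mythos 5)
First, note that this theorem is not proved in the paper at all: it is imported verbatim from \cite{BMT} (Theorems~2.15 and 2.17), and the only piece of its proof that the present paper reproduces is the translation--averaging argument, which reappears in the proof of Proposition~\ref{pro.perrea}. Measured against that, the first three of your four steps are correct and essentially the intended ones: the identity $\frac{d}{dt}u(X(t,x))=|\nabla u(X(t,x))|^2\geq m^2$ together with the Cauchy--Schwarz length bound does give global existence of the trajectory up to the (unique) hitting time, the implicit function theorem applied to $F(t,x)=u(X(t,x))$ gives $\tau\in C^1$, the cocycle relation $\tau(X(s,x))=\tau(x)-s$ and differentiation at $s=0$ give $\nabla v\cdot\nabla u=-\De u$ hence $\div(e^v\nabla u)=0$, and the necessity of \eqref{bdDeu} follows by integrating $\frac{d}{dt}\big[\log\si(X(t,x))\big]=-\De u(X(t,x))$ for a (sufficiently regular) periodic $\si$.

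The genuine gap is the sufficiency of \eqref{bdDeu}, which you correctly identify as the hard step but then only gesture at. Your proposed Gottschalk--Hedlund argument does not go through as stated: that theorem requires minimality (or at least a transitive point together with compactness of the phase space), and the flow of $\nabla u$ descended to $\TT^d$ is in general far from minimal -- for instance when $u$ is linear plus a periodic perturbation the orbits may close up or accumulate on lower-dimensional invariant sets -- while the cross-section $\{u=0\}$ does not descend to a compact section of the torus unless $\langle\nabla u\rangle$ has rational direction. Moreover you are trying to prove something stronger than what is claimed: the theorem does not ask for a \emph{continuous} periodic corrector, only for a periodic conductivity bounded above and below. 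The intended argument is much softer. Condition \eqref{bdDeu} says exactly that $w(x):=\int_0^{\tau(x)}\De u(X(s,x))\,ds$ is bounded, so $\si=e^{w}$ from \eqref{sitau} already satisfies $\div(\si\nabla u)=0$ with $\si,\si^{-1}\in L^\infty(\RR^d)$; one then sets $\si_n(x):=(2n+1)^{-d}\sum_{|k|_\infty\leq n}\si(x+k)$, observes that $\div(\si(\cdot+k)\nabla u)=0$ for every $k\in\ZZ^d$ by periodicity of $\nabla u$, and passes to a weak-$*$ limit $\si_\sharp$, which is $Y$-periodic, bounded above and below, and still satisfies $\div(\si_\sharp\nabla u)=0$ in $\D'(\RR^d)$. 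This is precisely the averaging device used in the proof of Proposition~\ref{pro.perrea}. Without some such construction your sketch does not yet establish the ``if'' direction of the last assertion.
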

\noindent
In the case of a gradient field, the local isotropic realizability which follows from the non-vanishing condition \eqref{Du�0} thanks to the rectification theorem (see \cite{BMT}, Theorem~2.2 $i)$), is thus equivalent to the global realizability given by the previous theorem.
\par
The case of a regular divergence free field $j$ in $\RR^3$ is much more intricate. First of all, a necessary condition for the isotropic realizability is the orthogonality of $j$ and $\curl j$ in~$\RR^3$. Conversely, if $j$ is non-zero and orthogonal to $\curl j$ in $\RR^3$, then Frobenius' theorem implies that $j$ is isotropically realizable locally in $\RR^3$ (see Proposition~\ref{pro.locrea}). However, contrary to the case of a gradient field, these two conditions  are not sufficient to ensure the global realizability (see Section~\ref{ss.cexF} for a counter-example). This strictly local nature of Frobenius' theorem is strongly connected to cohomology which is outside the scope of this paper. On the other hand, we cannot use for a current field the properties of a gradient system which permits us in particular to define the time $\tau(x)$ satisfying \eqref{tau(x)}.
\par
Our approach concerning the isotropic realizability of a current field is still based on dynamical systems. But now, the procedure to construct an admissible conductivity associated with a given regular divergence free field $j$, uses a combination of three dynamical flows which are not of gradient type. To this end, we need that the three fields $j$, $\curl j$ and $j\times\curl j$ make an orthogonal basis of $\RR^3$, including in this way Frobenius' condition. Then, the method consists in flowing from a fixed point $x_0\in\RR^3$, first with the flow $X_1$ along the direction $j/|j|$ during a time $t_1$, next with the flow $X_2$ along the direction $\curl j/|\curl j|$ during a time $t_2$, finally with the flow $X_3$ along the direction $j/|j|^2\times\curl j$ during a time $t_3$. So, we obtain the triple time dynamical flow
\beq\label{Xti}
(t_1,t_2,t_3)\mapsto X_{32}(t_3,t_2,t_1)=X_3\big(t_3,X_2(t_2,X_1(t_1))\big),\quad\mbox{with}\quad X_{32}(0,0,0)=x_0,
\eeq
which is assumed to be a $C^1$-diffeomorphism onto $\RR^3$. Under these assumptions we prove (see Theorem~\ref{thm.glorea}) that the field $j$ is isotropically realizable with the conductivity $\si$ defined by
\beq\label{siX}
\si\big(X_{32}(t_1,t_2,t_3)\big):=\exp\left(\int_0^{t_3}{|\curl j|^2\over|j|^2}\big(X_{32}(s,t_2,t_1)\big)\,ds\right),\quad\mbox{for }(t_1,t_2,t_3)\in\RR^3,
\eeq
This result can be regarded as a global Frobenius' theorem, and is illustrated by the very simple current field $j$ of Section~\ref{ss.jsh}, which yields an infinite set of (not obvious) admissible conductivities. Unfortunately, Section~\ref{ss.jplan} shows that the approach with the triple time dynamical flow fails for a periodic regular field $j$ of a particular form which is everywhere perpendicular to a constant vector, since $\curl j$ does vanish in $\RR^3$ (see Remark~\ref{rem.curlj=0}). However, in this case the divergence free field can be written as an orthogonal gradient, which allows us to apply Theorem~\ref{thm.elerea} for a two-dimensional electric field.
\par
When the field $j$ is periodic, the isotropic realizability in the torus needs an extra assumption as in the case of a periodic electric field \cite{BMT} (Theorem~2.17). Under the former conditions which ensure the isotropic realizability of $j$ in the whole space $\RR^3$, we prove (see Corollary~\ref{cor.perrea}) that the field $j$ is isotropically realizable in the torus, namely reads as $\si\nabla u$ with both $\si$ and $\nabla u$ periodic, if and only if
\beq\label{bdX}
\sup_{(t_1,t_2)\in\RR^2}\left(\int_{-\infty}^{\infty}{|\curl j|^2\over |j|^2}\big(X_{32}(s,t_2,t_1)\big)\,ds\right)<\infty,
\eeq
which is equivalent to the boundedness from below and above of the conductivity~\eqref{siX} in $\RR^3$.
The sharpness of condition \eqref{bdX} is illustrated by Proposition~\ref{pro.fgh} and Example~\ref{rem.f} below.
\par
The paper is divided in two parts. In Section~\ref{s.rea} we study the validity of the isotropic realizability of a regular divergence free field first in the whole space $\RR^3$, then in the torus when the current field is assumed to be periodic. Section~\ref{s.exa} is devoted to examples and counter-examples which illustrate the theoretical results of Section~\ref{s.rea}.
\subsubsection*{Notations}
\begin{itemize}
\item $Y:=[0,1]^3$ and $Y':=[0,1]^2$.
\item $\langle\cdot\rangle$ denotes the average over $Y$.
\item $C^k_\sharp(Y)$ denotes the space of $k$-continuously differentiable $Y$-periodic functions on $\RR^d$.
\item $L^2_\sharp(Y)$ denotes the space of $Y$-periodic functions in $L^2_{\rm }(\RR^d)$, and $H^1_\sharp(Y)$ denotes the space of functions $\ph\in L^2_\sharp(Y)$ such that $\nabla\ph\in L^2_\sharp(Y)^d$.
\item For any open set $\Om$ of $\RR^d$, $C^\infty_c(\Om)$ denotes the space of  smooth functions with compact support in $\Om$, and $\D'(\Om)$ the space of distributions on $\Om$.
\end{itemize}
\section{Results of isotropic realizability}\label{s.rea}
\subsection{Realizability in the whole space}
Let us start by the following definition:
\begin{Def}
Let $j$ be a divergence free field in $L^\infty(\RR^3)^3$ -- $j$ will be taken regular in the sequel --
The field $j$ is said to be {\em isotropically realizable in} $\RR^3$ as a current field, if there exist an isotropic conductivity $\si>0$ with $\si,\si^{-1}\in L^\infty(\RR^3)$, and a potential $u\in W^{1,\infty}(\RR^3)^3$, such that $j=\si\nabla u$. Moreover, when $j$ is $Y$-periodic, $j$ is said to be {\em isotropically realizable in the torus} if $\si$ and $\nabla u$ can be chosen $Y$-periodic.
\end{Def}
\par
First of all we have the following result which provides a criterion for the local isotropic realizability of a regular current field:
\begin{Pro}\label{pro.locrea}
Let $j$ be a vector-valued function such that
\beq\label{divj=0}
j\in C^2(\RR^3)^3,\quad j\neq 0\;\;\mbox{in }\RR^3,\quad\mbox{and}\quad\div j=0\;\;\mbox{in }\RR^3.
\eeq
Then, a necessary and sufficient condition for the current field $j$ to be locally isotropically realizable in $\RR^3$ with some positive $C^1$ conductivity $\si$ is that
\beq\label{j.curlj=0}
j\cdot\curl j=0\quad\mbox{in }\RR^3.
\eeq
\end{Pro}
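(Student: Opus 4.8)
The necessity is a one-line computation. If $j=\si\nabla u$ with $\si>0$ of class $C^1$, then $\si^{-1}j=\nabla u$ is curl-free, so $\curl(\si^{-1}j)=0$. Expanding with the product rule $\curl(fv)=\nabla f\times v+f\,\curl v$ and multiplying by $\si^{2}$ gives $\si\,\curl j=\nabla\si\times j$; taking the scalar product with $j$ and using that $j\cdot(\nabla\si\times j)=0$ yields $\si\,(j\cdot\curl j)=0$, hence \eqref{j.curlj=0} since $\si>0$.

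For the sufficiency I would read $j=\si\nabla u$ geometrically: it forces $\nabla u$ to be parallel to $j$, i.e. the level surfaces of $u$ must be everywhere orthogonal to $j$. Constructing such a $u$ near a given point is exactly the problem of locally integrating the $2$-plane field $x\mapsto\{v\in\RR^3:v\cdot j(x)=0\}$, or equivalently the Pfaffian equation $\om:=j_1\,dx_1+j_2\,dx_2+j_3\,dx_3=0$. By the Frobenius theorem this codimension-one distribution is completely integrable in a neighbourhood of each point if and only if $\om\wedge d\om=0$. A direct computation gives
\[
d\om=(\curl j)_1\,dx_2\wedge dx_3+(\curl j)_2\,dx_3\wedge dx_1+(\curl j)_3\,dx_1\wedge dx_2,
\]
and therefore $\om\wedge d\om=(j\cdot\curl j)\,dx_1\wedge dx_2\wedge dx_3$, so the integrability condition is precisely \eqref{j.curlj=0}.

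Granting \eqref{j.curlj=0}, Frobenius then provides, near any $x_0\in\RR^3$, a function $u$ of class $C^2$ with $\nabla u\neq 0$ whose level sets are the integral surfaces, so that $du$ and $\om$ have the same kernel at each point and are thus proportional: $j=\mu\,\nabla u$ for some scalar function $\mu$. Because $j\neq 0$ by \eqref{divj=0}, neither $\mu$ nor $\nabla u$ vanishes, so $\mu$ keeps a constant sign on a connected neighbourhood; replacing $u$ by $-u$ if needed we may take $\mu>0$ and set $\si:=\mu$. Writing $\si=j_i/\partial_i u$ for an index $i$ with $\partial_i u\neq 0$ shows $\si\in C^1$, since $j\in C^1$ and $u\in C^2$. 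This is the desired local representation $j=\si\nabla u$ with $\si>0$ of class $C^1$.

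The substantive content is entirely the reduction to Frobenius' theorem together with the elementary identity $\om\wedge d\om=(j\cdot\curl j)\,dx_1\wedge dx_2\wedge dx_3$. The only point requiring care is the bookkeeping for the integrating factor $\mu$: one must verify that the proportionality coefficient between $j$ and $\nabla u$ inherits $C^1$ regularity---it does, losing exactly one derivative relative to $j$---and that it can be normalized to be positive on a connected neighbourhood. I expect this regularity and positivity check to be the only genuine, if minor, obstacle; the necessity and the form computation are routine.
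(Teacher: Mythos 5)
Your proof is correct and follows essentially the same route as the paper: necessity via the product rule for the curl of $\si\nabla u$, and sufficiency via Frobenius' theorem together with a continuity/connectedness argument to fix the sign of the integrating factor. The only difference is that you unpack the citation of Frobenius by verifying the integrability condition $\om\wedge d\om=(j\cdot\curl j)\,dx_1\wedge dx_2\wedge dx_3$ and the $C^1$ regularity of the integrating factor explicitly, details the paper delegates to the reference to Cartan.
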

\begin{proof}
If $j$ is isotropically realizable with some conductivity $\si\in C^1(\RR^3)$, then $j=\si\nabla u$ with $\si\in C^1(\RR^3)$, and
\beq\label{curlj.j=0}
\curl j=\curl(\si\nabla u)=\nabla\si\times\nabla u+\si\,\curl(\nabla u)={\nabla\si\over\si}\times j\quad\mbox{ in }\RR^3,
\eeq
which yields immediately \eqref{j.curlj=0}. Conversely, if \eqref{divj=0} and \eqref{j.curlj=0} are both satisfied, then by Frobenius' theorem (see, {\em e.g.}, \cite{Car}, Theorem~6.6.2 and example p.~279) there exists locally a non-zero $C^1$ function $\si$ and a $C^1$ function $u$, such that $j=\si\nabla u$. The function $\si$ can be chosen positive by a continuity argument, which shows the isotropic realizability of $j$ locally in $\RR^3$.
Actually, the divergence free condition is not necessary to obtain the local realizability.
\end{proof}
Frobenius' condition \eqref{j.curlj=0} implies the local isotropic realizability for a current field $j$ satisfying condition \eqref{divj=0}. However, contrary to the case \cite{BMT} of an electric field for which the local realizability and the global realizability turn out to be equivalent, these two conditions are not sufficient to ensure the global isotropic realizability of $j$, as shown by the counter-example of Section~\ref{ss.cexF}. To overcome this difficulty we will use an alternative approach based on the flows along the three orthogonal directions $j$, $\curl j$ and $j\times\curl j$ under suitable assumptions which are detailed below:
\par
Let $j$ be a current field satisfying conditions \eqref{divj=0}. Beyond condition \eqref{j.curlj=0} we assume that
\beq\label{ortbasj}
\big(j,\curl j,j\times\curl j\big)\;\;\mbox{is an orthogonal basis of }\RR^3.
\eeq
Then, for a fixed $x_0\in\RR^3$ and for any $(t_1,t_2,t_3)\in\RR^3$, consider the flows $X_1(t,x)$, $X_2(t,x)$, $X_3(t,x)$ along the orthogonal directions $j/|j|$, $\curl j/|\curl j|$, $j/|j|^2\times\curl j$ respectively, that is
\beq\label{Xi}
\left\{\ba{ll}
\dis {\partial X_1\over\partial t}(t,x)={j\over|j|}\big(X_1(t,x)\big), & X_1(0)=x,
\\ \ecart
\dis {\partial X_2\over\partial t}(t,x)={\curl j\over|\curl j|}\big(X_2(t,x)\big), & X_2(0)=x,
\\ \ecart
\dis {\partial  X_3\over\partial t}(t,x)={j\times \curl j\over|j|^2}\big(X_3(t,x)\big), & X_3(0,x)=x.
\ea\right.
\quad\mbox{for }(t,x)\in\RR\times\RR^3.
\eeq
Note that the flows $X_1$ and $X_2$ are well defined in the whole set $\RR\times\RR^3$, since by \eqref{divj=0} ${j/|j|}$ and ${\curl j/|\curl j|}$ belong to $C^1(\RR^3)$ and are bounded in $\RR^3$ (see, {\em e.g.}, \cite{Arn}, Chap.~2.6). In the sequel, we will assume that the flow $X_3$ is also defined in the whole set $\RR\times\RR^3$. That is the case if for example $j\in C^2_\sharp(Y)^3$.
\begin{Rem}\label{rem.X3}
In view of the normalization of the flows $X_1$, $X_2$, and to avoid the latter assumption, it seems {\em a~priori} more logical to renormalize the flow $X_3$ with $|j||\curl j|$ rather than $|j|^2$. The derivation of the isotropic realizability is quite similar in both cases (see Theorem~\ref{thm.glorea} and Remark~\ref{rem.tX3} just below). However, the normalization by $|j|^2$ arises naturally in the orthogonal decomposition \eqref{jxcurljDw} which is a key-ingredient for the construction of an admissible conductivity associated with the isotropic realizability of $j$. Moreover, it gives a necessary condition for the isotropic realizability in the torus without the need to assume that $\curl j$ does not vanish in~$\RR^3$ (see the first part of Corollary~\ref{cor.perrea} below). Actually, there are lots of examples where $\curl j$ vanishes somewhere (see Section~\ref{ss.jplan} below), but the normalization of the flow $X_3$ by $|j|^2$ may be relevant in some cases (see Proposition~\ref{pro.jz=0} and Remark~\ref{rem.curlj=0}).
\end{Rem}
Next, denote for a fixed point $x_0\in\RR^3$,
\beq\label{X1X23}
\left\{\ba{ll}
X_1(t_1):=X_1(t_1,x_0)
\\ \ecart
X_{23}(s_2,s_3,t_1):=X_2\big(s_2,X_3(s_3,X_1(t_1,x_0))\big)
\\ \ecart
X_{32}(t_3,t_2,t_1):=X_3\big(t_3,X_2(t_2,X_1(t_1,x_0))\big),
\ea\right.
\quad\mbox{for }(s_1,s_2,s_3,t_1,t_2,t_3)\in\RR^6.
\eeq
So, the dynamical flow $X_{32}$ is obtained by flowing from the point $x_0$ along the direction $j/|j|$ during the time $t_1$, then from the point $X_1(t_1)$ along the direction $\curl j/|\curl j|$ during the time $t_2$, finally from the point $X_2(t_2,X_1(t_1))$ along the direction $(j/|j|^2)\times\curl j$ during the time $t_3$. The end point is thus $X_{32}(t_3,t_2,t_1)$. A similar construction holds for $X_{23}(t_2,t_3,t_1)$ by commuting the flows $X_2$ and $X_3$.
Now, the main assumption is that any point $x$ in $\RR^3$ can be attained by the composition of the three flows, so that $x$ can be represented in a unique way by the system of coordinates $(t_1,t_2,t_3)$, that is
\beq\label{diffX32}
(t_1,t_2,t_3)\mapsto X_{32}(t_3,t_2,t_1)\quad\mbox{is a $C^1$-diffeomorphism onto $\RR^3$.}
\eeq
\par
Then, we have the following sufficient condition for the global isotropic realizability in $\RR^3$:
\begin{Thm}\label{thm.glorea}
Let $j$ be a field in $\RR^3$ satisfying \eqref{divj=0} and \eqref{ortbasj}.
Also assume that condition \eqref{diffX32} holds true.
Then, the field $j$ is isotropically realizable in $\RR^3$ with the conductivity $e^w\in C^1(\RR^3)$, where the function $w$ is defined by
\beq\label{w(x)}
w(x)=w\big(X_{32}(t_3,t_2,t_1)\big):=\int_0^{t_3}{|\curl j|^2\over|j|^2}\big(X_{32}(s,t_2,t_1)\big)\,ds,\quad\mbox{for }(t_1,t_2,t_3)\in\RR^3.
\eeq
\end{Thm}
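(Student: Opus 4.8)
The plan is to turn the realizability of $j$ as $\si\nabla u$ into a single scalar identity for $w$, and then to read that identity off the flow $X_3$. Setting $\si=e^{w}$, the field is realizable as soon as $e^{-w}j$ is a gradient, i.e. $\curl(e^{-w}j)=0$; since $\curl(e^{-w}j)=e^{-w}\big(\curl j-\nabla w\times j\big)$, this is equivalent to $\curl j=\nabla w\times j$ (compare \eqref{curlj.j=0}, with $\nabla\si/\si=\nabla w$). Expanding $\nabla w$ in the orthogonal basis \eqref{ortbasj} and using $\curl j\times j=-\,j\times\curl j$ and $(j\times\curl j)\times j=|j|^{2}\curl j$ (valid since $j\cdot\curl j=0$), the identity $\curl j=\nabla w\times j$ is equivalent to the two scalar conditions
\[
\text{(A)}\quad\nabla w\cdot\curl j=0,\qquad\text{(B)}\quad\nabla w\cdot(j\times\curl j)=|\curl j|^{2}.
\]
Thus the whole statement reduces to proving (A) and (B) for the $w$ of \eqref{w(x)}.

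Condition (B) is immediate from the definition of $w$. Differentiating \eqref{w(x)} in $t_{3}$ gives $\partial_{t_{3}}\big(w(X_{32})\big)=|\curl j|^{2}/|j|^{2}$ at $x=X_{32}(t_{3},t_{2},t_{1})$, while by \eqref{Xi} the last flow applied in $X_{32}$ is $X_{3}$, so $\partial_{t_{3}}X_{32}=(j\times\curl j)/|j|^{2}$ at $x$. The chain rule then yields $\nabla w\cdot(j\times\curl j)/|j|^{2}=|\curl j|^{2}/|j|^{2}$, which is exactly (B).

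For (A) I would argue geometrically. By \eqref{j.curlj=0} the plane field $j^{\perp}$ is integrable (Frobenius), and since both $\curl j/|\curl j|$ and $(j\times\curl j)/|j|^{2}$ lie in $j^{\perp}$, the flows $X_{2}$ and $X_{3}$ preserve its integral surfaces; hence for fixed $t_{1}$ the image $\Sigma_{t_{1}}:=\{X_{32}(t_{3},t_{2},t_{1}):(t_{2},t_{3})\in\RR^{2}\}$ is a single integral surface, which by \eqref{diffX32} is embedded and diffeomorphic to $\RR^{2}$. The crux is the vector identity $j\cdot\curl\big((j\times\curl j)/|j|^{2}\big)=0$, which I would obtain by expanding the curl and using $\div j=0$, $\div(\curl j)=0$ and $j\cdot\curl j=0$; the normalization by $|j|^{2}$ is exactly what forces the resulting first-order terms to cancel. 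This identity says that the tangential field $(j\times\curl j)/|j|^{2}$ is a closed $1$-form on each leaf $\Sigma_{t_{1}}$, hence exact since $\Sigma_{t_{1}}$ is simply connected: it equals the surface gradient $\nabla_{\Sigma}W$ of some potential $W$. Along the seed curve $\{t_{3}=0\}$, which is an $X_{2}$-orbit, $W$ is constant because $\nabla_{\Sigma}W=(j\times\curl j)/|j|^{2}$ is orthogonal to $\curl j$; along the $X_{3}$-lines $dW/dt_{3}=|\curl j|^{2}/|j|^{2}$ (the squared length of $(j\times\curl j)/|j|^{2}$) matches the integrand in \eqref{w(x)}. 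Therefore $w=W+\text{const}$ on each leaf, so $\nabla_{\Sigma}w=(j\times\curl j)/|j|^{2}$ and $\nabla w\cdot\curl j=\big((j\times\curl j)\cdot\curl j\big)/|j|^{2}=0$, which is (A).

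The main obstacle is precisely the identity $j\cdot\curl\big((j\times\curl j)/|j|^{2}\big)=0$: this is where the divergence-free hypothesis \eqref{divj=0} and the $|j|^{2}$-normalization of $X_{3}$ are indispensable, and it is the computation I expect to require the most care. A secondary, global point is that passing from closed to exact on each $\Sigma_{t_{1}}$ uses the simple connectedness guaranteed by \eqref{diffX32}; this is exactly the global input absent from Frobenius' theorem alone, consistent with the counterexample of Section~\ref{ss.cexF}. Once (A) and (B) hold we have $\curl(e^{-w}j)=0$ on the simply connected space $\RR^{3}$, so there exists $u\in C^{2}(\RR^{3})$ with $\nabla u=e^{-w}j$; as $w\in C^{1}(\RR^{3})$ the conductivity $\si=e^{w}\in C^{1}(\RR^{3})$ is positive and $j=\si\nabla u$, which completes the proof.
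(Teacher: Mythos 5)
Your proposal is correct, and it shares the paper's overall skeleton: reduce realizability to the two scalar identities $\nabla w\cdot\curl j=0$ and $\nabla w\cdot(j\times\curl j)=|\curl j|^2$ (the paper's \eqref{Dw.curlj=0} and \eqref{Dwjxcurlj}), obtain the second by differentiating \eqref{w(x)} in $t_3$, and then fight for the first on the surfaces $\{t_1=c_1\}$. Where you genuinely diverge is in the proof of the first identity. The paper proceeds by an inductive grid/commutation construction (its second step) and then shows $\int_{\partial Q}\om=0$ by tiling $Q$ with small squares $Q_k$ carrying local Frobenius potentials $w_k$, so that $\int_{\partial Q_k}\om=\int_{\partial Q_k}dw_k=0$ as in \eqref{intomQk=0}; this is a hands-on verification that the tangential $1$-form $\om$ associated with $(j\times\curl j)/|j|^2$ is closed on the leaf, followed by a hands-on Poincar\'e lemma. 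You replace both by the pointwise identity $j\cdot\curl\big((j\times\curl j)/|j|^2\big)=0$ plus exactness on the simply connected leaf $\Sigma_{t_1}\cong\RR^2$ supplied by \eqref{diffX32}. The identity you defer does hold: writing $c=\curl j$, one computes $j\cdot\curl(j\times c)=\nabla(|j|^2)\cdot c$ (using $\div c=0$, $j\cdot c=0$ and $(j\cdot\nabla)j=\tfrac12\nabla(|j|^2)-j\times c$), and this exactly cancels $j\cdot\big(\nabla(|j|^{-2})\times(j\times c)\big)=-|j|^{-2}\nabla(|j|^2)\cdot c$; alternatively it follows at once from the local decomposition \eqref{jxcurljDw}, since on each leaf $\om$ is locally $dw_k$ and hence closed. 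Your matching of the leaf potential $W$ with $w$ along the $X_2$-seed curve and the $X_3$-lines is then the paper's \eqref{intDwcurlj}--\eqref{Dw.curlj=0} in coordinate-free form. What your route buys is a cleaner, computation-based substitute for the grid argument; what it still owes (and what the paper's second step is really for) is the justification that the $X_2$- and $X_3$-orbits issued from $X_1(t_1)$ sweep out a single embedded integral surface of $j^\perp$ --- your appeal to the fact that flows tangent to an integrable distribution preserve its leaves, combined with \eqref{diffX32}, is an acceptable way to supply this. The final passage from $\curl(e^{-w}j)=0$ to $j=e^w\nabla u$ matches the paper's \eqref{curlewj}.
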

\begin{Rem}\label{rem.tX3}
In view of Remark~\ref{rem.X3}, if we renormalize the flow $X_3$ by $|j||\curl j|$, it is replaced by the flow $\tilde{X}_3$ defined by
\beq\label{tX3}
{\partial\tilde{X}_3\over\partial t}(t,x)={j\times \curl j\over|j||\curl j|}\big(\tilde{X}_3(t,x)\big),\quad \tilde{X}_3(0,x)=x,
\eeq
which by condition \eqref{ortbasj} is defined in $\RR\times\RR^3$. Then, similarly to \eqref{diffX32}, assuming  that the triple flow
\beq\label{tX32}
\tilde{X}_{32}:(t_1,t_2,t_3)\mapsto \tilde{X}_3\big(t_3,X_2(t_2,X_1(t_1,x_0)\big)
\eeq
is a $C^1$-diffeomorphism onto $\RR^3$, we obtain that  the field $j$ is isotropically realizable in $\RR^3$ with the conductivity $e^{\tilde{w}}\in C^1(\RR^3)$, where
\beq\label{tw(x)}
\tilde{w}(x)=w\big(\tilde{X}_{32}(t_3,t_2,t_1)\big):=\int_0^{t_3}{|\curl j|\over|j|}\big(\tilde{X}_{32}(s,t_2,t_1)\big)\,ds,\quad\mbox{for }(t_1,t_2,t_3)\in\RR^3.
\eeq
The proof is quite similar to the proof of Theorem~\ref{thm.glorea} replacing formula \eqref{Dtwjxcurlj} by
\beq\label{Dtwjxcurlj}
\nabla\tilde{w}\cdot\left({j\times\curl j\over|j||\curl j|}\right)={|\curl j|\over|j|}
=\left({j\times\curl j\over|j|^2}\right)\cdot\left({j\times\curl j\over|j||\curl j|}\right)\quad\mbox{in }\RR^3.
\eeq
\end{Rem}
\begin{Rem}\label{rem.homX32}
Alternatively, we can replace the diffeomorphism condition \eqref{diffX32} by
\beq\label{homX32}
(t_1,t_2,t_3)\mapsto X_{32}(t_3,t_2,t_1)\quad\mbox{is a homeomorphism onto $\RR^3$ of class $C^1$,}
\eeq
so that the Jacobian of $X_{32}$ may vanish somewhere. In compensation we have to assume that the function $w$ of \eqref{w(x)} belongs to $C^1(\RR^3)$. See the application to the example of Section~\ref{ss.jsh}.
\end{Rem}
\begin{Rem}
Condition \eqref{diffX32} is not sharp to ensure the isotropic realizability of the current field.
Indeed, the planar example of Proposition~\ref{pro.jz=0} below shows that the isotropic realizability can be satisfied while condition \eqref{ortbasj} is violated. See also Example~\ref{rem.f} below.
\end{Rem}
\par\noindent
{\bf Proof of Theorem \ref{thm.glorea}.}
\par\ss\noindent
{\it First step: } Construction of an admissible conductivity.
\par\ss\noindent
Let $j$ be a field satisfying \eqref{divj=0}.
Assume that $j$ is isotropically realizable in $\RR^3$, namely there exists $u,w\in C^1(\RR^3)$ such that
\beq\label{juw}
j=e^w\,\nabla u\quad\mbox{in }\RR^3.
\eeq
It seems that we can choose the potential $u$ arbitrarily along the trajectory $X_1(t)$, provided $\nabla u$ does not vanish along this trajectory.
So, define
\beq\label{ut1}
u\big(X_1(t)\big):=\int_0^{t}\big|j\big(X_1(s)\big)\big|\,ds,\quad\mbox{for }t\in\RR.
\eeq
Taking the derivative with respect to $t$ of \eqref{ut1} and using \eqref{Xi}, \eqref{juw}, we get that
\beq
{\partial\over\partial t}\left[u\big(X_1(t)\big)\right]=\big|j\big(X_1(t)\big)\big|=\nabla u\big(X_1(t)\big)\cdot{\partial\over\partial t}\,X_1(t)
=e^{-w(X_1(t))}\,{j\cdot j\over|j|}\big(X_1(t)\big),
\eeq
which implies that
\beq\label{wXt1}
w\big(X_1(t)\big)=0,\quad\mbox{for }t\in\RR.
\eeq
Next, taking the curl of \eqref{juw} we get that $\curl j=\nabla w\times j$, hence
\beq\label{jxcurljDw}
{j\times\curl j\over|j|^2}=\nabla w-\Pi_j(\nabla w)\quad\mbox{in }\RR^3,
\eeq
where $\Pi_j$ is the orthogonal projection on the subspace $\RR j$.
Hence, integrating \eqref{jxcurljDw} along the trajectory $X_3$, we have
\[
\ba{l}
\dis w\big(X_{32}(t_3,t_2,t_1)\big)-w\big(X_{32}(0,t_2,t_1)\big)=w\big(X_{32}(t_3,t_2,t_1)\big)-w\big(X_2(t_2,X_1(t_1))\big)
\\ \ecart
\dis =\int_0^{t_3}\nabla w\big(X_{32}(s,t_2,t_1)\big)\cdot{\partial X_{32}\over\partial s}(s,t_2,t_1)\,ds
=\int_0^{t_3}\left|{j\times\curl j\over|j|^2}\right|^2\big(X_{32}(s,t_2,t_1)\big)\,ds
\\ \ecart
\dis =\int_0^{t_3}{|\curl j|^2\over|j|^2}\big(X_{32}(s,t_2,t_1)\big)\,ds.
\ea
\]
Then, integrating \eqref{jxcurljDw} along the trajectory $X_2$ and using \eqref{j.curlj=0}, we get that
\[
\ba{l}
\dis w\big(X_2(t_2,X_1(t_1))\big)-w\big(X_2(0,X_1(t_1))\big)=w\big(X_2(t_2,X_1(t_1))\big)-w\big(X_1(t_1)\big)
\\ \ecart
\dis =\int_0^{t_2}\nabla w\big(X_2(s,t_1)\big)\cdot{\partial X_2\over\partial s}(s,t_1)\,ds
=\int_0^{t_2}\nabla w\big(X_2(s,t_1)\big)\cdot{\curl j\over|\curl j|}\big(X_2(s,t_1)\big)\,ds=0.
\ea
\]
The two previous equalities combined with \eqref{wXt1} yield the desired expression \eqref{w(x)}.
%which well defines a continuous function due to condition \eqref{diffX32}.
\par\ms\noindent
{\it Second step: } Construction of a grid on the surface $\{t_1=c_1\}$, generated by the flows $X_2,X_3$.
\par\ss\noindent
Let us prove that for any $(c_1,t_2',t_2,t_3)\in\RR^4$, the flows $X_2$ and $X_3$ generate on the regular surface $\{t_1\!=\!c_1\}$, a thin grid whose:
\begin{itemize}
\item step is of small enough size $\nu>0$,
\item horizontal lines are trajectories along the flow $X_2$,
\item vertical lines are trajectories along the flow $X_3$,
\item two opposite vertices are the points
\beq\label{x'x}
x':=X_2(t_2',X_1(c_1))\quad\mbox{and}\quad x:=X_3(t_3,X_2(t_2,x'))=X_{32}(t_3,t_2'+t_2,c_1).
\eeq
\end{itemize}
\noindent
First, we divide the flows in small time steps, as shown the diagram
\beq\label{x'hvx}
\ba{cc}
& \kern -.2cm x
\\
& \kern -.2cm \big\uparrow
\\
& \kern -.2cm \bullet
\\
& \kern -.2cm \big\uparrow
\\
& \kern -.2cm \bullet
\\
& \kern -.2cm \big\uparrow
\\
& \kern -.2cm \bullet
\\
& \kern -.2cm \big\uparrow
\\
x' \longrightarrow \bullet \longrightarrow \bullet \longrightarrow \bullet \longrightarrow \bullet \longrightarrow & \kern -.2cm \bullet
\ea
\eeq
where the horizontal arrows represent the flow $X_2$, and the vertical ones the flow $X_3$.
Then, we commute step by step the flows $X_2$ and $X_3$, while remaining on the surface $\{t_1\!=\!c_1\}$, as shown the commutation diagram
\beq\label{compq}
\ba{cccccccccc}
& \cdots & q & & & & \bullet & \stackrel{^{\textstyle X_2}}{\longrightarrow} & q &
\\
\vdots & & \big\uparrow & \kern -.3cm X_3 & \quad\Longrightarrow\quad & X_3 \kern -.3cm & \big\uparrow & & \big\uparrow
& \kern -.3cm X_3
\\
p & \dis \mathop{\longrightarrow}_{\textstyle X_2} & \bullet & & & & p & \dis \mathop{\longrightarrow}_{\textstyle X_2} & \bullet &
\ea
\eeq
to finally obtain the desired grid
\beq\label{Gx'xy'y}
\ba{ccccccccccc}
y' & \kern -.2cm \longrightarrow & \kern -.2cm \bullet & \kern -.2cm \longrightarrow & \kern -.2cm \bullet & \kern -.2cm \longrightarrow
& \kern -.2cm \bullet & \kern -.2cm \longrightarrow & \kern -.2cm \bullet & \kern -.2cm \longrightarrow & \kern -.2cm x
\\
\big\uparrow & & \kern -.2cm \big\uparrow & & \kern -.2cm \big\uparrow & & \kern -.2cm \big\uparrow & & \kern -.2cm \big\uparrow
& & \kern -.2cm \big\uparrow
\\
\bullet & \kern -.2cm \longrightarrow & \kern -.2cm \bullet & \kern -.2cm \longrightarrow & \kern -.2cm \bullet & \kern -.2cm \longrightarrow
& \kern -.2cm \bullet & \kern -.2cm \longrightarrow & \kern -.2cm \bullet & \kern -.2cm \longrightarrow & \kern -.2cm \bullet
\\
\big\uparrow & & \kern -.2cm \big\uparrow & & \kern -.2cm \big\uparrow & & \kern -.2cm \big\uparrow & & \kern -.2cm \big\uparrow
& & \kern -.2cm \big\uparrow
\\
\bullet & \kern -.2cm \longrightarrow & \kern -.2cm \bullet & \kern -.2cm \longrightarrow & \kern -.2cm \bullet & \kern -.2cm \longrightarrow
& \kern -.2cm \bullet & \kern -.2cm \longrightarrow & \kern -.2cm \bullet & \kern -.2cm \longrightarrow & \kern -.2cm \bullet
\\
\big\uparrow & & \kern -.2cm \big\uparrow & & \kern -.2cm \big\uparrow & & \kern -.2cm \big\uparrow & & \kern -.2cm \big\uparrow
& & \kern -.2cm \big\uparrow
\\
\bullet & \kern -.2cm \longrightarrow & \kern -.2cm \bullet & \kern -.2cm \longrightarrow & \kern -.2cm \bullet & \kern -.2cm \longrightarrow
& \kern -.2cm \bullet & \kern -.2cm \longrightarrow & \kern -.2cm \bullet & \kern -.2cm \longrightarrow & \kern -.2cm \bullet
\\
\big\uparrow & & \kern -.2cm \big\uparrow & & \kern -.2cm \big\uparrow & & \kern -.2cm \big\uparrow & & \kern -.2cm \big\uparrow
& & \kern -.2cm \big\uparrow
\\
x' & \kern -.2cm \longrightarrow & \kern -.2cm \bullet & \kern -.2cm \longrightarrow & \kern -.2cm \bullet & \kern -.2cm \longrightarrow
& \kern -.2cm \bullet & \kern -.2cm \longrightarrow & \kern -.2cm \bullet & \kern -.2cm \longrightarrow & \kern -.2cm y
\ea
\eeq
where by \eqref{x'x} there exists $(s_2,s_3)\in\RR^2$ such that
\beq\label{x'y'yx}
x':=X_2(t_2',X_1(c_1)),\quad y'=X_3(s_3,x'),\quad y:=X_2(t_2,x'),\quad x=X_3(t_3,y)=X_2(s_2,y').
\eeq
The vertices $x',y',y,x$ of the grid \eqref{Gx'xy'y} satisfy the commutative diagram
\beq\label{X23=X32}
\ba{ccccc}
 y'=X_3(s_3,x') & \dis \stackrel{^{\textstyle X_2}}{\longrightarrow} & X_2(s_2,y')= & \kern -.3cm X_3(t_3,y)=x
\\ \ecart
X_3 \,\big\uparrow & & &  \kern -.2cm \big\uparrow\, X_3
\\ \ecart
x'=X_2(t_2',X_1(c_1)) & \dis \mathop{\longrightarrow}_{\textstyle X_2} & &  \kern -.3cm X_2(t_2,x')=y.
\ea
\eeq
Note that the grid \eqref{Gx'xy'y} is schematic. A more realistic grid is represented in figure~\ref{fig1} below.
\par
Frobenius' theorem will allow us to make the local switching \eqref{compq} thanks to a potential $u$ satisfying $j=\si\nabla u$.
To this end, we proceed by induction on the number $n$ of switchings, for an appropriate time step $\nu>0$ which will be chosen later. The induction hypothesis, for a given $n\in\NN$, consists in the existence of a partial grid $G_n(\nu)$, with $n$ switchings, represented by the diagram
\beq\label{x'phqx}
\ba{ccccccccccc}
& & & & & & & & & & \kern -.2cm x
\\
& & & & & & & & & & \kern -.2cm \big\uparrow
\\
& & & & & & & & & & \kern -.2cm \bullet
\\
& & & & & & & & & & \kern -.2cm \big\uparrow
\\
& & & & & \kern -.2cm \cdots & \kern -.2cm q & \kern -.2cm \longrightarrow & \kern -.2cm \bullet & \kern -.2cm \longrightarrow & \kern -.2cm \bullet
\\
& & & & \kern -.2cm \vdots & & \kern -.2cm \big\uparrow & & \kern -.2cm \big\uparrow & & \kern -.2cm \big\uparrow
\\
\bullet & \kern -.2cm \longrightarrow & \kern -.2cm \bullet & \kern -.2cm \longrightarrow & \kern -.2cm p & \kern -.2cm \longrightarrow
& \kern -.2cm \bullet & \kern -.2cm \longrightarrow & \kern -.2cm \bullet & \kern -.2cm \longrightarrow & \kern -.2cm \bullet
\\*[.1cm]
\kern -.2cm \big\uparrow & & \kern -.2cm \big\uparrow & & \kern -.2cm \big\uparrow & & \kern -.2cm \big\uparrow & & \kern -.2cm \big\uparrow
& & \kern -.2cm \big\uparrow
\\
x' & \kern -.2cm \longrightarrow & \kern -.2cm \bullet & \kern -.2cm \longrightarrow & \kern -.2cm \bullet & \kern -.2cm \longrightarrow
& \kern -.2cm \bullet & \kern -.2cm \longrightarrow & \kern -.2cm \bullet & \kern -.2cm \longrightarrow & \kern -.2cm \bullet
\ea
\eeq
which lies on the surface $\{t_1\!=\!c_1\}$.
\par
First, the result holds for $n=0$. Indeed, the points $x'$ and $x$ defined by \eqref{x'x} clearly belong to the surface $\{t_1\!=\!c_1\}$, so does the initial diagram \eqref{x'hvx} for any time step $\tau$.
\par
Next, assume that after a number $n$ of switchings, we are led to the grid $G_n(\nu)$ \eqref{x'phqx} which, by the induction hypothesis, lies on the surface $\{t_1\!=\!c_1\}$. By virtue of Frobenius' theorem there exist an open neighborhood $V$ of $p$, and a potential $u\in C^1(V)$ such that $j=\si\nabla u$ in $V$. Then, we may chose $\tau>0$ small enough so that
\beq\label{S32S23V}
\ba{ll}
q\in \kern -.2cm & S_{32}(p,\tau):=\big\{X_3(\de_3,X_2(\de_2,p)):|\de_2|+|\de_3|<\tau\big\}\subset V
\\ \ecart
& S_{23}(p,\tau):=\big\{X_2(\de_2,X_3(\de_3,p)):|\de_2|+|\de_3|<\tau\big\}\subset V,
\ea
\eeq
independently of the point $p$ in a given compact set of $\RR^3$. Since the potential $u$ is constant along the flows $X_2$ and $X_3$, we have for any point $r\in S_{32}(p,\tau)$,
\[
u(r)=u\big(X_3(\de_3,X_2(\de_2,p))\big)=u\big(X_3(0,X_2(\de_2,p))\big)=u\big(X_2(\de_2,p)\big)=u\big(X_2(0,p)\big)=u(p).
\]
The same equality holds for any point $r\in S_{23}(p,\tau)$, hence
\beq\label{S32S23u=up}
q\in S_{32}(p,\tau)\subset\big\{u=u(p)\big\}\cap V\quad\mbox{and}\quad S_{23}(p,\tau)\subset\big\{u=u(p)\big\}\cap V.
\eeq
Moreover, since the trajectory along the flow $X_2$, passing through the point $p$ in diagram \eqref{x'phqx} lies on the surface $\{t_1\!=\!c_1\}$ (by the induction hypothesis), thanks to the semi-group property satisfied by the flow $X_3$, we have that for any $|\de_2|+|\de_3|<t$, there exists $(t_2,t_3)\in\RR^2$ such that
\beq
X_3\big(\de_3,X_2(\de_2,p)\big)=X_3\big(\de_3,X_{32}(t_3,t_2,c_1)\big)=X_{32}(\de_3+t_3,t_2,c_1)\in \big\{t_1\!=\!c_1\big\}.
\eeq
Hence, by the definition \eqref{S32S23V} of $S_{32}(p,\tau)$, we get that
\beq\label{S32t1=c1}
\forall\,t>0,\quad S_{32}(p,t)\subset\big\{t_1\!=\!c_1\big\}.
\eeq
However, thanks to the condition \eqref{ortbasj} which yields $\nabla u\neq 0$ in $V$, the sets $S_{32}(p,\tau)$, $S_{23}(p,\tau)$ and $\{u=u(p)\}\cap V$
%,\quad \{t_1=c_1\big\}\cap V
are regular open surfaces in $V$. Hence, from the inclusions \eqref{S32S23u=up} we deduce that the surfaces $S_{32}(p,\tau)$ and $S_{23}(p,\tau)$ actually agree with the equipotential $\{u=u(p)\}$ in some neighborhood of $p$ containing $q$, but independent of $p$ in a given compact set $K$ of $\RR^3$. This combined with the condition \eqref{S32t1=c1} (which does not depend on time), implies that the time step $\nu<\tau$  may be chosen small enough, but independently of the point $p$ in $K$, so that
\beq
q\in S_{23}(p,\nu)\subset\big\{t_1\!=\!c_1\big\}.
\eeq
Therefore, the following grid $G_{n+1}(\nu)$ which completes \eqref{x'phqx}
\beq\label{xpvqx'}
\ba{ccccccccccc}
& & & & & & & & & & \kern -.2cm x
\\
& & & & & & & & & & \kern -.2cm \big\uparrow
\\
& & & & & & & & & & \kern -.2cm \bullet
\\
& & & & & & & & & & \kern -.2cm \big\uparrow
\\
& & & &  \kern -.2cm \bullet & \kern -.2cm \longrightarrow & \kern -.2cm q & \kern -.2cm \longrightarrow & \kern -.2cm \bullet & \kern -.2cm \longrightarrow & \kern -.2cm \bullet
\\
& & & & \kern -.2cm \big\uparrow  & & \kern -.2cm \big\uparrow & & \kern -.2cm \big\uparrow & & \kern -.2cm \big\uparrow
\\
\bullet & \kern -.2cm \longrightarrow & \kern -.2cm \bullet & \kern -.2cm \longrightarrow & \kern -.2cm p & \kern -.2cm \longrightarrow
& \kern -.2cm \bullet & \kern -.2cm \longrightarrow & \kern -.2cm \bullet & \kern -.2cm \longrightarrow & \kern -.2cm \bullet
\\*[.1cm]
\kern -.2cm \big\uparrow & & \kern -.2cm \big\uparrow & & \kern -.2cm \big\uparrow & & \kern -.2cm \big\uparrow & & \kern -.2cm \big\uparrow
& & \kern -.2cm \big\uparrow
\\
x' & \kern -.2cm \longrightarrow & \kern -.2cm \bullet & \kern -.2cm \longrightarrow & \kern -.2cm \bullet & \kern -.2cm \longrightarrow
& \kern -.2cm \bullet & \kern -.2cm \longrightarrow & \kern -.2cm \bullet & \kern -.2cm \longrightarrow & \kern -.2cm \bullet
\ea
\eeq
also lies on the surface $\{t_1=c_1\}$.
The induction proof is thus done, which establishes the existence of the grid \eqref{Gx'xy'y}.
\par\ms\noindent
{\it Third step: } Proof of the isotropic realizability with the conductivity $e^w$.
\par\ss\noindent
Let us prove that the function $w$ defined by \eqref{w(x)} combined with condition \eqref{diffX32} satisfies the equality \eqref{jxcurljDw}. This yields the global isotropic realizability of $j$, since by \eqref{divj=0}, \eqref{j.curlj=0} and \eqref{jxcurljDw} we have
\beq\label{curlewj}
\curl(e^{-w}j)=e^{-w}\left(\curl j-\nabla w\times j\right)=e^{-w}\left(\curl j-{(j\times\curl j)\times j\over|j|^2}\right)=0.
\eeq
\par
On the one hand, taking the derivative of \eqref{w(x)} with respect to $t_3$, we have
\beq
\nabla w\cdot\left({j\times\curl j\over|j|^2}\right)\big(X_{32}(t_3,t_2,t_1)\big)={|\curl j|^2\over|j|^2}\big(X_{32}(t_3,t_2,t_1)\big),\quad\forall\,(t_1,t_2,t_3)\in\RR^3,
\eeq
which together with condition \eqref{diffX32} implies that
\beq\label{Dwjxcurlj}
\nabla w\cdot\left({j\times\curl j\over|j|^2}\right)={|\curl j|^2\over|j|^2}=\left|{j\times\curl j\over|j|^2}\right|^2\quad\mbox{in }\RR^3.
\eeq
Moreover, taking the derivative of \eqref{w(x)} with respect to $t_2$ for $t_3=0$, we get that
\[
\ba{ll}
\dis 0=\nabla w\big(X_{32}(0,t_2,t_1)\big)\cdot{\partial X_{32}\over\partial t_2}(0,t_2,t_1) & \dis =\nabla w\big(X_2(t_2,X_1(t_1))\big)\cdot
{\partial X_2\over\partial s}(t_2,X_1(t_1))
\\ \ecart
& \dis =\nabla w\big(X_2(t_2,X_1(t_1))\big)\cdot{\curl j\over|\curl j|}\big(X_2(t_2,X_1(t_1))\big),
\ea
\]
%where we have used that $\partial_{t_2}X_3$ satisfies the linear ode
%\[\partial_{t}\left(\partial_{t_2}X_{32}(t,t_2,t_1)\right)=D\big({\tex {j\times{\rm curl}\,j\over|j|^2}}\big)\big(X_{32}(t,t_2,t_1)\big)\,\partial_{t_2}X_{32}(t,t_2,t_1),\quad \partial_{t_2}X_{32}(0,t_2,t_1)=\partial_{t_2}X_2(t_2,X_1(t_1)).\]
which yields
\beq\label{Dwcurljt3=0}
\nabla w\cdot\curl j=0\quad\mbox{on }\big\{X_2(t_2,X_1(t_1)):(t_1,t_2)\in\RR^2\big\}.
\eeq

\begin{figure}[t!]
\centering
\includegraphics[scale=.4]{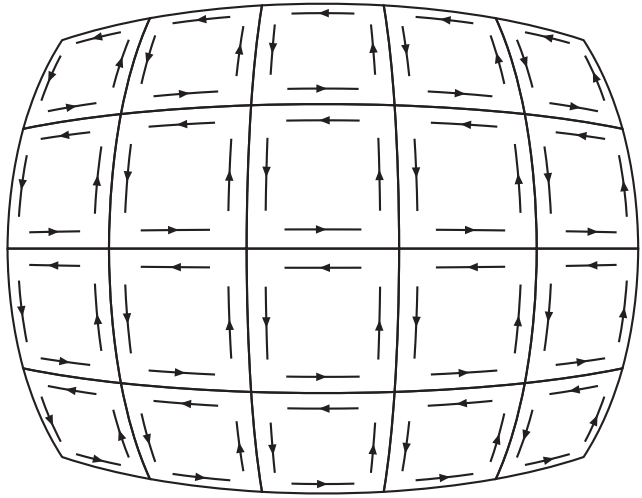}
\caption{\it The square $Q$ split in small squares $Q_k$ on the surface $\{t_1=c_1\}$}
\label{fig1}
\end{figure}

\par
On the other hand, for a constant $c_1\in\RR$, consider on the surface $\{t_1=c_1\}$ the curvilinear integral over $\partial Q$ defined by
\beq\label{intQdom}
\int_{\stackrel{\curvearrowleft}{\partial Q}} {\om}:=\int_{0}^{t_3}{|\curl j|^2\over|j|^2}(X_3(s,y)\big)\,ds
-\int_{0}^{s_3}{|\curl j|^2\over|j|^2}\big(X_3(s,x')\big)\,ds,
\eeq
where $Q=Q_{x',y',y,x}$, with $x',y',y,x$ defined by \eqref{x'y'yx}, is the ``square" lying on the surface $\{t_1=c_1\}$, whose:
\begin{itemize}
\item horizontal sides are trajectories associated with the flow $X_2$ along $\curl j/|\curl j|$,
\item vertical sides are trajectories associated with the flow $X_3$ along $j/|j|^2\times\curl j$,
\item vertices $x',y',y,x$ make the loop \eqref{X23=X32}.
\end{itemize}
Now, consider the thin grid on $Q$,   associated with the grid \eqref{Gx'xy'y}, whose lines are parallel to the trajectories along $\curl j/|\curl j|$ and $(j/|j|^2)\times\curl j$, and which splits $Q$ in small squares $Q_k$, as shown in \reff{fig1}.. According to the second step of the proof, we may choose the time step of the grid so that Frobenius' theorem applies in some open neighborhood $V_k$ of $\RR^3$, containing the closed square $\overline{Q_k}$. Namely, there exists $w_k\in C^1(V_k)$ such that
\[
\curl(e^{-w_k}\,j)=0\quad\mbox{in }V_k,
\]
which implies formulas \eqref{jxcurljDw} and \eqref{Dwjxcurlj} for $w_k$. Using the loops induced by the boundaries $\partial Q_k$, the contributions of the interior vertical lines (along $j/|j|^2\times\curl j$) two by two cancel (see \reff{fig1}.), which leads to
\beq\label{intomQQk}
\int_{\stackrel{\curvearrowleft}{\partial Q}} {\om}=\sum_{k}\int_{\stackrel{\curvearrowleft}{\partial Q_k}} {\om}.
\eeq
However, since
\[
\nabla w_k\cdot\curl j=0\quad\mbox{in }V_k\supset\overline{Q_k},
\]
the curvilinear integrals of $dw_k$ on the two horizontal lines of $\partial Q_k$ (along $\curl j/|\curl j|$) are equal to~$0$, while by putting \eqref{Dwjxcurlj} in \eqref{intQdom} the curvilinear integrals of $dw_k$ on the two vertical lines of $\partial Q_k$ (along $j/|j|^2\times\curl j$) agree with the curvilinear integral of ${\om}$ over $\partial Q_k$, which yields
\beq\label{intomQk=0}
\int_{\stackrel{\curvearrowleft}{\partial Q_k}} {\om}=\int_{\stackrel{\curvearrowleft}{\partial Q_k}} dw_k=0.
\eeq
The last equality is due to the fact that $dw_k$ is an exact differential on the closed loop~$\partial Q_k$.
Hence, from \eqref{intomQQk} and \eqref{intomQk=0} we deduce that
\beq\label{intQom=0}
\int_{\stackrel{\curvearrowleft}{\partial Q}} {\om}=0.
\eeq
\par
Next, consider the function $w$ defined by \eqref{w(x)}.
Taking the loop \eqref{X23=X32} in the anti-clockwise direction, the contributions of the vertical lines 
(along $(j/|j|^2)\times\curl j$) of the curvilinear integral of $dw$ over $\partial Q$ in the anti-clockwise direction, read as, in view of \eqref{Dwjxcurlj} and \eqref{intQdom},
\beq\label{verlin}
\int_{0}^{t_3}\nabla w\cdot\!\left({j\times\curl j\over|j|^2}\right)\!\big(X_3(s,y)\big)\,ds
-\int_{0}^{s_3}\nabla w\cdot\!\left({j\times\curl j\over|j|^2}\right)\!\big(X_3(s,x')\big)\,ds
=\int_{\stackrel{\curvearrowleft}{\partial Q}} {\om}.
\eeq
Moreover, again by \eqref{X23=X32} the contributions of the horizontal lines (along $\curl j/|\curl j|$) of the curvilinear integral of $dw$ over $\partial Q$ in the anti-clockwise direction, read as
\beq\label{horlin}
\int_{0}^{t_2}\nabla w\cdot{\curl j\over|\curl j|}\big(X_2(t_2,x')\big)\,ds
-\int_{0}^{s_2}\nabla w\cdot{\curl j\over|\curl j|}\big(X_2(s,y')\big)\,ds.
\eeq
From \eqref{verlin} and \eqref{horlin} we deduce that
\[
\ba{ll}
\dis 0 & \dis =\int_{\stackrel{\curvearrowleft}{\partial Q}} dw
\\ \ecart
& \dis =\int_{\stackrel{\curvearrowleft}{\partial Q}} {\om}
+\int_{0}^{t_2}\nabla w\cdot{\curl j\over|\curl j|}\big(X_2(t_2,x')\big)\,ds
-\int_{0}^{s_2}\nabla w\cdot{\curl j\over|\curl j|}\big(X_2(s,y')\big)\,ds,
\ea
\]
which by \eqref{intQom=0} implies that
\beq\label{intDwcurlj}
\int_{0}^{s_2}\nabla w\cdot{\curl j\over|\curl j|}\big(X_2(s,y')\big)\,ds
=\int_{0}^{t_2}\nabla w\cdot{\curl j\over|\curl j|}\big(X_2(t_2,x')\big)\,ds.
\eeq
Equality \eqref{intDwcurlj} combined with \eqref{x'y'yx} and \eqref{Dwcurljt3=0} yields
\beq\label{intDwcurlj2}
{1\over s_2}\int_{0}^{s_2}\nabla w\cdot{\curl j\over|\curl j|}\big(X_2(s,X_3(s_3,x'))\big)\,ds=0,\quad\mbox{where}\quad x'=X_2(t_2',X_1(c_1)).
\eeq
Then, making $t_2\to 0$ -- which, by the continuity of the flows in the diagram \eqref{X23=X32}, implies that $s_2\to 0$ and $X_3(s_3,x')\to X_3(t_3,x')$ -- and using the continuity of the integrand in the left-hand side of~\eqref{intDwcurlj2}, we get that
\beq
\nabla w\cdot\curl j\,\big(X_{32}(t_3,t_2',c_1)\big)=\nabla w\cdot\curl j\,\big(X_3(t_3,x')\big)=0,\quad\forall\,(c_1,t_2',t_3)\in\RR^3.
\eeq
This combined with the diffeomorphism condition \eqref{diffX32} leads to
\beq\label{Dw.curlj=0}
\nabla w\cdot\curl j=0\quad\mbox{in }\RR^3.
\eeq
\par
Finally, combining the equalities \eqref{Dw.curlj=0} and \eqref{Dwjxcurlj} with \eqref{ortbasj}, we derive the formula \eqref{jxcurljDw}.
Therefore, thanks to \eqref{curlewj} we may conclude that the current field $j$ is isotropically realizable with the conductivity $e^w$ defined by \eqref{w(x)}, which concludes the proof of Theorem~\ref{thm.glorea}.
%\beq\label{w(x)}\si(x)=\si\big(X_{32}(t_3,t_2,t_1)\big):=\exp\left(\int_0^{t_3}{|\curl j|^2\over|j|^2}\big(X_{32}(s,t_2,t_1)\big)\,ds\right),\quad\mbox{for }x\in\RR^3.\eeq
\cqfd
\subsection{Isotropic realizability in the torus}
As in the case of an electric field \cite{BMT} the isotropic realizability of a periodic current field in $\RR^3$ does not imply in general the isotropic realizability in the torus, as shown in Example~\ref{rem.f} below.
\par
First of all, we have the following criterion of isotropic realizability in the torus:
\begin{Pro}\label{pro.perrea}
Let $j$ be a $Y$-periodic divergence free field in $L^\infty_\sharp(Y)^3$. Then, the field $j$ is isotropically realizable in the torus with a conductivity $\si>0$ satisfying $\si,\si^{-1}\in L^\infty_\sharp(Y)$, if and only if there exists $w\in L^\infty(\RR)$ such that
\beq\label{curlwj}
\curl(e^{-w}j)=0\quad\mbox{in }\RR^3.
\eeq
\end{Pro}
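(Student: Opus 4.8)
The plan is to prove the two implications separately; the whole content is that passing to a logarithm identifies the conductivity $\si$ with the bounded function $w$ through $\si=e^w$. Throughout I read the hypothesis as asserting a \emph{$Y$-periodic} bounded $w$, since periodicity of $\si=e^w$ is built into the requirement $\si,\si^{-1}\in L^\infty_\sharp(Y)$.

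For the direct implication, I would start from a torus realization $j=\si\nabla u$ with $\si,\si^{-1}\in L^\infty_\sharp(Y)$ and $\nabla u$ $Y$-periodic, and simply set $w:=\ln\si$. Boundedness of $\si$ and $\si^{-1}$ gives $w\in L^\infty(\RR^3)$, and $Y$-periodicity of $\si$ gives that of $w$; moreover $e^{-w}j=\si^{-1}j=\nabla u$, so $\curl(e^{-w}j)=\curl(\nabla u)=0$ in $\D'(\RR^3)$. This produces the desired $w$ with no difficulty.

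For the converse, I would take the $Y$-periodic $w\in L^\infty$ with $\curl(e^{-w}j)=0$ and integrate the curl-free field $F:=e^{-w}j\in L^\infty(\RR^3)^3$. Since $\RR^3$ is simply connected, the Poincaré lemma (in its distributional / de Rham form, valid at the $L^2_{\rm loc}$ regularity of $F$) yields a potential $u\in W^{1,\infty}_{\rm loc}(\RR^3)$ with $\nabla u=e^{-w}j$. Putting $\si:=e^w$ then gives $j=e^w\nabla u=\si\nabla u$. It remains to verify the functional-space requirements: $\si=e^w$ and $\si^{-1}=e^{-w}$ are bounded because $w\in L^\infty$, and $\si$ is $Y$-periodic because $w$ is, so $\si,\si^{-1}\in L^\infty_\sharp(Y)$; and $\nabla u=e^{-w}j$, being the product of two $Y$-periodic $L^\infty$ functions, is itself $Y$-periodic. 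Hence $j=\si\nabla u$ is a torus realization.

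I expect the only genuinely delicate points to lie in the converse. First, one must apply the Poincaré lemma at the $L^\infty$ (rather than smooth) regularity of $e^{-w}j$; this is legitimate precisely because $\RR^3$ is simply connected, but it should be phrased as the de Rham lemma for distributional curl-free fields rather than taken for granted. Second, and more conceptually, the hard part is to recognize that it is the \emph{periodicity} of $w$, not merely its boundedness, that turns the whole-space identity $j=\si\nabla u$ into a realization in the torus: the potential $u$ is in general not periodic --- it carries the drift $\langle e^{-w}j\rangle\cdot x$, with only $\nabla u$ periodic --- so a bounded but non-periodic curl-free representative would give only a whole-space realization. This is the single place where the argument uses more than the whole-space result of Theorem~\ref{thm.glorea}, and it is why the statement must require a bounded $w$ that is $Y$-periodic.
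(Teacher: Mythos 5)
There is a genuine gap, and it comes from your opening reinterpretation of the statement. You read the hypothesis of the ``if'' direction as asserting a \emph{$Y$-periodic} bounded $w$, on the grounds that periodicity is ``built into'' the requirement $\si,\si^{-1}\in L^\infty_\sharp(Y)$. But that requirement sits on the \emph{other} side of the equivalence: the proposition asks only for some $w\in L^\infty(\RR^3)$ with $\curl(e^{-w}j)=0$, with no periodicity whatsoever. This is not a cosmetic point. The proposition is invoked in the proof of Corollary~\ref{cor.perrea}~$ii)$ with the function $w$ built from the flow $X_3$ in \eqref{w(x)}, which under \eqref{L2X3'} is bounded but has no reason to be periodic; if the proposition required a periodic $w$, that application would collapse. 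Under your reading the converse becomes essentially trivial (as you yourself observe, periodicity of $w$ is exactly what makes everything work), so you have proved a strictly weaker statement and skipped the entire content of the ``if'' direction.

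The paper's proof of the converse starts exactly where you stop: given a possibly non-periodic bounded $w$ with $e^{-w}j=\nabla u$, it constructs a \emph{periodic} integrating factor by a Cesàro-averaging argument over the lattice translates, $w_n(x)=(2n+1)^{-3}\sum_{|k|_\infty\leq n}w(x+k)$, extracts a weak-$*$ limit $w_\sharp\in L^\infty$ which is $Y$-periodic, and passes to the limit in $\curl(e^{-w_n}j)=0$ (using the periodicity of $j$ to shift the argument of $j$ along with that of $w$) to get $\curl(e^{-w_\sharp}j)=0$. That averaging-plus-weak-compactness step is the one idea your proposal is missing. Your forward direction ($w:=\ln\si$) and your use of the distributional Poincar\'e lemma on the simply connected $\RR^3$ to produce $u$ with $\nabla u=e^{-w}j$ are both fine and match the paper; the remark that Theorem~\ref{thm.glorea} enters here is a red herring, since that theorem plays no role in this proposition.
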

\begin{proof}
Assume that $j$ is isotropically realizable in the torus with a conductivity $\si>0$ satisfying $\si,\si^{-1}\in L^\infty_\sharp(Y)$.
Then, defining $w:=\ln\si\in L^\infty(\RR^3)$, the function $e^{-w}j$ is a gradient, which implies \eqref{curlwj}.
\par
Conversely, assume that \eqref{curlwj} holds with $w\in L^\infty(\RR)$. Then, there exists $u\in W^{1,\infty}(\RR^3)$ such that $e^{-w}j=\nabla u$, or equivalently $j=e^w\,\nabla u$ in $\RR^3$. It is not clear that $e^w$ is periodic. However, we can construct a suitable periodic conductivity by adapting the average argument of \cite{BMT} (proof of Theorem~2.17).
To this end, define the sequence $(w_n)_{n\in\NN\setminus\{0\}}$, by
\[
w_n(x):={1\over (2n+1)^3}\kern -.2cm\sum_{k\in\ZZ^3,\,|k|_\infty\leq n}\kern -.4cm w(x+k),
\quad\mbox{for }x\in\RR^3,\quad\mbox{where }|k|_\infty:=\max\left(|k_1|,|k_2|,|k_3|\right).
\]
Since $w$ is in $L^\infty(\RR^3)$, the sequence $w_n$ is bounded in $L^\infty(\RR)$, and thus converges weakly-$*$ to some function $w_\sharp$ in $L^\infty(\RR^3)$. It is easy to check that $w_\sharp$ is $Y$-periodic. Moreover, by the periodicity of $j$ we have
\[
\curl(e^{-w_n}j)={1\over (2n+1)^3}\kern -.2cm\sum_{k\in\ZZ^3,\,|k|_\infty\leq n}\kern -.4cm\curl\big(e^{-w(\cdot+k)}j(\cdot+k)\big)
={1\over (2n+1)^3}\kern -.2cm\sum_{k\in\ZZ^3,\,|k|_\infty\leq n}\kern -.4cm\curl\big(\nabla u(\cdot+k)\big)=0.
\]
As $e^{-w_n}j$ converges weakly-$*$ to $e^{-w_\sharp}j$ in $L^\infty(\RR^3)^3$, the previous equality leads to
\[
\curl(e^{-w_\sharp}j)=0\quad\mbox{in }\D'(\RR^3)^3.
\]
Therefore, $e^{-w_\sharp}j$ is a periodic gradient and $j$ is isotropically realizable in the torus with the conductivity $e^{w_\sharp}$.
\end{proof}
As a consequence of Proposition~\ref{pro.perrea} and Theorem~\ref{thm.glorea}, we have the following result:
\begin{Cor}\label{cor.perrea}
Let $j$ be a $Y$-periodic current field satisfying conditions \eqref{divj=0} and \eqref{j.curlj=0}.
\begin{itemize}
\item[$i)$] Assume that the field $j$ is isotropically realizable in the torus with a positive conductivity $\si\in C^1_\sharp(Y)$.
Then, there exists a constant $C>0$ such that
\beq\label{L2X3'}
\int_{-\infty}^{\infty}{|\curl j|^2\over |j|^2}\big(X_3(s,x)\big)\,ds\leq C,\quad\forall\,x\in\RR^3,
\eeq
where the flow $X_3$ is defined by \eqref{Xi}.
\item[$ii)$] Alternatively, assume that the conditions \eqref{ortbasj} and \eqref{diffX32} are satisfied.
Then, the field $j$ is isotropically realizable in the torus if and only if estimate \eqref{L2X3'} holds true.
\end{itemize}
\end{Cor}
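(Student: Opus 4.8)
Here is how I would organize the proof of Corollary~\ref{cor.perrea}. Everything rests on a single computation along the flow $X_3$, plus the boundedness of the explicit function $w_0$ supplied by Theorem~\ref{thm.glorea}.

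\emph{The common identity.} First I would record the fact underlying both parts. Suppose $j=e^{w}\nabla u$ with $w\in C^1(\RR^3)$, equivalently $\curl(e^{-w}j)=0$. Taking the curl gives $\curl j=\nabla w\times j$, hence the orthogonal decomposition \eqref{jxcurljDw}, $(j\times\curl j)/|j|^2=\nabla w-\Pi_j(\nabla w)$. Dotting with $\nabla w$ and using $j\cdot\curl j=0$ from \eqref{j.curlj=0} yields
\[
\nabla w\cdot\frac{j\times\curl j}{|j|^2}=\left|\frac{j\times\curl j}{|j|^2}\right|^2=\frac{|\curl j|^2}{|j|^2}\quad\text{in }\RR^3 .
\]
Since $X_3$ flows along $(j\times\curl j)/|j|^2$, this says that $t\mapsto w\big(X_3(t,x)\big)$ is differentiable with derivative $\frac{|\curl j|^2}{|j|^2}\big(X_3(t,x)\big)$, so for every $x\in\RR^3$ and every $T>0$,
\[
\int_{-T}^{T}\frac{|\curl j|^2}{|j|^2}\big(X_3(s,x)\big)\,ds=w\big(X_3(T,x)\big)-w\big(X_3(-T,x)\big).
\]

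\emph{Part $i)$.} Here the realizing conductivity is $\si\in C^1_\sharp(Y)$, so $w:=\ln\si\in C^1(\RR^3)$ is $Y$-periodic, hence bounded. Applying the identity above with this $w$ and using that the integrand is non-negative, the right-hand side is at most $2\,\|w\|_{L^\infty}$ uniformly in $x$ and $T$; letting $T\to\infty$ gives \eqref{L2X3'} with $C=2\,\|\ln\si\|_{L^\infty}$. Note \eqref{divj=0} and periodicity guarantee that $X_3$ is globally defined, and that \eqref{ortbasj} is not needed for this part.

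\emph{Part $ii)$, reduction.} I would first translate \eqref{L2X3'} into a statement about the $C^1$ function $w_0$ of \eqref{w(x)} produced by Theorem~\ref{thm.glorea} (which uses \eqref{ortbasj} and \eqref{diffX32}). Because $X_3(s,X_3(t,x))=X_3(s+t,x)$, the integral in \eqref{L2X3'} is constant along each $X_3$-trajectory; moreover $w_0$ vanishes on the surface $\Sigma:=\{X_{32}(0,t_2,t_1)\}$, and for $p\in\Sigma$ one has $w_0\big(X_3(t_3,p)\big)=\int_0^{t_3}\frac{|\curl j|^2}{|j|^2}\big(X_3(s,p)\big)\,ds$, which is monotone non-decreasing in $t_3$. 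Hence, running over every point of $\RR^3$ via the diffeomorphism \eqref{diffX32}, condition \eqref{L2X3'} holds if and only if $w_0\in L^\infty(\RR^3)$. Granting this, the implication ``\eqref{L2X3'} $\Rightarrow$ realizable in the torus'' is immediate: $w_0\in L^\infty(\RR^3)$ together with $\curl(e^{-w_0}j)=0$, established in \eqref{curlewj}, lets us invoke the converse part of Proposition~\ref{pro.perrea}.

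\emph{Part $ii)$, the remaining implication and the main obstacle.} It remains to show that realizability in the torus forces $w_0\in L^\infty$. By Proposition~\ref{pro.perrea} there exists $w\in L^\infty(\RR^3)$ with $\curl(e^{-w}j)=0$, i.e. $e^{-w}j=\nabla u$ for some $u\in W^{1,\infty}(\RR^3)$. The crux is that $w-w_0$ is constant along every $X_3$-trajectory. Indeed, $e^{-w}j=\nabla u$ and $e^{-w_0}j=\nabla u_0$ are both gradients parallel to $j$, so $\nabla u=\rho\,\nabla u_0$ with $\rho:=e^{w_0-w}>0$; taking the curl gives $\nabla\rho\times\nabla u_0=0$, so $\nabla\rho$ is parallel to $j$ and therefore orthogonal to the $X_3$-direction $(j\times\curl j)/|j|^2$, whence $\rho$, and thus $w-w_0=-\ln\rho$, is constant along each $X_3$-trajectory. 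Consequently $t\mapsto w_0\big(X_3(t,p)\big)$ and $t\mapsto w\big(X_3(t,p)\big)$ differ by a constant along the trajectory through $p$, so the total variation of the monotone $C^1$ map $t\mapsto w_0\big(X_3(t,p)\big)$ — which by the identity above equals $\int_{-\infty}^{\infty}\frac{|\curl j|^2}{|j|^2}\big(X_3(s,p)\big)\,ds$ — is bounded by $2\,\|w\|_{L^\infty}$, giving \eqref{L2X3'}. I expect the only real difficulty to be making the step ``$w-w_0$ constant along $X_3$-trajectories'' rigorous at the merely $L^\infty$ regularity of $w$ (hence of $\rho$), since $\nabla\rho\times\nabla u_0=0$ must be read distributionally. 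I would settle this either by passing to the coordinates $(t_1,t_2,t_3)$ of \eqref{diffX32}, in which the assertion becomes $\partial_{t_3}(w-w_0)=0$ in $\D'$, or by observing that \eqref{ortbasj} and \eqref{diffX32} force $u$ and $u_0$ to share the level surfaces swept out by the flows $X_2,X_3$, so that $u=\Phi(u_0)$ for a monotone $\Phi$ and $\rho=\Phi'(u_0)$ depends only on $u_0$ — hence is constant on each such surface and, a fortiori, along each $X_3$-trajectory.
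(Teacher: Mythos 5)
Your argument is correct, and for most of the statement it coincides with the paper's proof: part $i)$ is obtained, exactly as in the paper, by differentiating $w=\ln\si$ along the flow $X_3$ via the decomposition \eqref{jxcurljDw} (giving \eqref{wdX}) and using the periodicity, hence boundedness, of $w$; and the ``if'' direction of part $ii)$ is, as in the paper, the observation that \eqref{L2X3'} together with the definition of $X_{32}$ makes the function $w$ of \eqref{w(x)} bounded, so that Proposition~\ref{pro.perrea} applies to the conductivity $e^w$ furnished by Theorem~\ref{thm.glorea}. Where you genuinely depart from the paper is the ``only if'' direction of part $ii)$: the paper disposes of it by invoking part $i)$, which presupposes that the realizing periodic conductivity is of class $C^1_\sharp(Y)$, whereas the Definition of realizability in the torus only provides $\si,\si^{-1}\in L^\infty_\sharp(Y)$. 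Your comparison argument --- writing $e^{-w}j=\nabla u$ and $e^{-w_0}j=\nabla u_0$, deducing $\nabla\rho\times\nabla u_0=0$ for $\rho=e^{w_0-w}$, hence that $w-w_0$ is constant along each $X_3$-trajectory, and transferring the $L^\infty$ bound from $w$ to the monotone function $t\mapsto w_0\big(X_3(t,p)\big)$ --- closes that regularity gap, at the price of the distributional care you yourself flag. That step does go through: since $\nabla u_0=e^{-w_0}j$ is $C^1$ and non-vanishing and \eqref{diffX32} is a $C^1$-diffeomorphism, the identity $\nabla\rho\times\nabla u_0=0$ in $\D'(\RR^3)$ yields $\partial_{t_3}\big((w-w_0)\circ X_{32}\big)=0$ in $\D'$, which is what you need. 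In short, your proof establishes part $ii)$ as literally stated (for merely bounded measurable conductivities), while the paper's shorter route is complete only under the implicit reading that the realizing conductivity in $ii)$ is $C^1_\sharp$, as in $i)$.
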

\begin{Rem}
Corollary~\ref{cor.perrea} with the boundedness condition~\eqref{L2X3'} is illustrated in Proposition~\ref{pro.fgh} below.
\end{Rem}
\par\noindent
{\bf Proof of Corollary~\ref{cor.perrea}.}
\par\ss\noindent
{\it Proof of $i)$.}
Denoting $\si=e^w\in C^1(\RR^3)$, we have $j=e^w\nabla u$, with $\nabla u\in C^1(\RR^3)^3$.
Then, from \eqref{jxcurljDw} we deduce that
\[
{\partial\over\partial t}\left[w\big(X_3(t,x)\big)\right]={|\curl j|^2\over |j|^2}\big(X_3(t,x)\big),\quad\forall\,(t,x)\in\RR\times\RR^3,
\]
which yields
\beq\label{wdX}
w\big(X_3(t,x)\big)-w(x)=\int_0^{t}{|\curl j|^2\over |j|^2}\big(X_3(s,x)\big)\,ds,\quad\forall\,(t,x)\in\RR\times\RR^3.
\eeq
Therefore, due to the boundedness of $w\in C^1_\sharp(Y)$, the estimate \eqref{L2X3'} holds.
\par\ms\noindent
{\it Proof of $ii)$.} By Theorem~\ref{thm.glorea} we already know that $j$ is isotropically realizable with the conductivity $\si=e^w\in C^1(\RR^3)$ defined by \eqref{w(x)}. If the field $j$ is isotropically realizable in the torus, then by $i)$ the estimate \eqref{L2X3'} holds. Conversely, in view of the estimate \eqref{L2X3'} combined with the definition \eqref{X1X23} of $X_{32}$, the function $w$ defined by \eqref{w(x)} clearly belongs to $L^\infty(\RR^3)$. Therefore, applying Proposition~\ref{pro.perrea} with the conductivity $e^w$, we get that $j$ is isotropically realizable in the torus.
\cqfd
\section{Examples and counter-examples}\label{s.exa}
In this section a point of $\RR^3$ is denoted by the coordinates $(x,y,z)$, and $(e_x,e_y,e_z)$ denotes the canonical basis of $\RR^3$.
\subsection{Example of global realizability in the space}\label{ss.jsh}
We will illustrate the construction of Theorem~\ref{thm.glorea} with the current field
\beq\label{jsh}
j(x,y,z):=\begin{pmatrix} 1 \\ \sinh x \\ 0 \end{pmatrix},\quad\mbox{for }(x,y,z)\in\RR^3,
\eeq
which is clearly non-zero and divergence free in $\RR^3$. We have
\beq
\curl j(x,y,z):=\begin{pmatrix} 0 \\ 0 \\ \cosh x \end{pmatrix}\quad\mbox{and}\quad
j\times \curl j(x,y,z)=\begin{pmatrix} \sinh x\,\cosh x \\ -\cosh x \\ 0 \end{pmatrix},
\eeq
so that the field $j$ satisfies \eqref{divj=0} and Frobenius' condition \eqref{j.curlj=0}.
\par
Noting that $|j|=|\curl j|=|\cosh x|$, and using the solutions to one-dimensional first-order odes, the flows $X_1$, $X_2$, $X_3$ of \eqref{Xi} associated with the field $j$ are given by
\beq
\ba{c}
\dis X_1(t)=\begin{pmatrix} {\rm argsinh}\,\big(t+\sinh\,(x_1(0))\big) \\ \sqrt{1+\big(t+\sinh\,(x_1(0))\big)^2}+y_1(0)-\cosh(x_1(0)) \\ z_1(0) \end{pmatrix},\quad
X_2(t)=\begin{pmatrix} x_2(0) \\ y_2(0) \\ t+z_2(0) \end{pmatrix},
\\ \ecart
X_3(t)=\begin{pmatrix} {\rm argsinh}\,\big(e^t\sinh\,(x_3(0))\big) \\ \dis -\int_0^{t}{ds\over\sqrt{1+e^{2s}\sinh^2(x_3(0))}}+y_3(0) \\ z_3(0) \end{pmatrix}.
\ea
\eeq
Hence, starting from the point $x_0=(0,1,0)$, the composed flows $X_{32}$, $X_{23}$ of \eqref{X1X23} are given by
\beq\label{X23sh}
X_{32}(t_3,t_2,t_1)=
\begin{pmatrix} {\rm argsinh}\,\big(t_1\,e^{t_3}\big) \\ \dis -\int_0^{t_3}{ds\over\sqrt{1+e^{2s}\,t_1^2}}+\sqrt{1+t_1^2} \\ t_2 \end{pmatrix},
\quad\mbox{for }(t_1,t_2,t_3)\in\RR^3.
\eeq
Making classical changes of variables in the integral term, formula \eqref{X23sh} can be written
\beq\label{X23shf}
X_{32}(t_3,t_2,t_1)=\left\{\ba{cl}
\begin{pmatrix} 0 \\ \dis 1-t_3 \\ t_2 \end{pmatrix}& \mbox{if }t_1=0
\\\\
\begin{pmatrix} {\rm argsinh}\,\big(t_1\,e^{t_3}\big) \\ \dis {\rm argtanh}\left({1\over\sqrt{1+t_1^2\,e^{2t_3}}}\right)+f(t_1) \\ t_2 \end{pmatrix}
& \mbox{if }t_1\neq 0,
\ea\right.
\eeq
where
\beq\label{shf}
f(t):=-\,{\rm argtanh}\left({1\over\sqrt{1+t^2}}\right)+\sqrt{1+t^2},\quad\mbox{for }t\neq 0.
\eeq
The function $f$ satisfies
\beq\label{shff'}
f(t)=\ln|t|+1-\ln 2+o(1)\quad\mbox{and}\quad f'(t)={\sqrt{1+t^2}\over t}={1\over t}+o(1),
\eeq
and is a $C^1$-diffeomorphism from $(0,\infty)$ onto $\RR$, and from $(-\infty,0)$ onto $\RR$.
Hence, we have
\beq\label{fdiff}
\forall\,(x,y)\in\RR\setminus\{0\}\times\RR,\ \exists\,!\,t_1\in\RR,\quad x\,t_1>0\quad\mbox{and}\quad y-{\rm argtanh}\left({1\over\cosh x}\right)=f(t_1),
\eeq
which together with \eqref{X23shf} implies that for this $t_1$,
\beq\label{xyzX32}
(x,y,z)=X_{32}(t_3,t_2,t_1)\quad\Leftrightarrow\quad
\left\{\ba{ll}
\left\{\ba{l}
x=0
\\
\dis y=1-t_3
\\
z=t_2
\ea\right.
& \mbox{if }t_1=0
\\ \ecart
\left\{\ba{l}
\sinh x=t_1\,e^{t_3}
\\
\dis y-{\rm argtanh}\left({1\over\cosh x}\right)=f(t_1)
\\
z=t_2.
\ea\right.
& \mbox{if }t_1\neq 0.
\ea\right.
\eeq
As a consequence of \eqref{X23shf}, \eqref{shf}, the first equality of \eqref{shff'}, \eqref{fdiff} and \eqref{xyzX32}, the mapping $X_{32}$ define a homeomorphism onto $\RR^3$, which is of class $C^1$ by \eqref{X23sh}. Unhappily, we can check that the Jacobian of $X_{23}$ vanishes (exclusively) on the line $\{t_1=0\}$. However, taking into account the equality $\sinh x=t_1\,e^{t_3}$, $X_{23}$ does establish a $C^1$-diffeomorphism from the half-spaces $\{\pm\,t_1>0\}$ onto the half-spaces $\{\pm\,x>0\}$. Therefore, condition \eqref{diffX32} holds true restricting ourselves on these half-spaces.
\par
On the other hand, since $|j|=|\curl j|$, the function $w$ defined by \eqref{w(x)} reads as
\beq\label{wsh}
w(x,y,z)=t_3=\left\{\ba{lll}
1-y & \mbox{if }x=0 &
\\ \ecart
\dis \ln\left({\sinh x\over t}\right) & \mbox{if }x\neq 0, & \dis \mbox{where}\quad x\,t>0,\ y-{\rm argtanh}\left({1\over\cosh x}\right)=f(t).
\ea\right.
\eeq
It is easy to check that the asymptotic expansions \eqref{shff'} satisfied by $f$ imply that $w\in C^1(\RR^3)$. Hence, the conditions of Theorem~\ref{thm.glorea} are fulfilled in the two half-spaces $\{\pm\,x>0\}$. Therefore, the field $j$ defined by \eqref{jsh} is isotropically realizable in the half-spaces $\{\pm\,x>0\}$, with the conductivity $\si\in C^1(\RR^3)$ given by
\beq\label{sish}
\si(x,y,z):=\left\{\ba{lll}
e^{1-y} & \mbox{if }x=0 &
\\ \ecart
\dis {\sinh x\over t} & \mbox{if }x\neq 0, & \dis \mbox{where}\quad x\,t>0,\ y-{\rm argtanh}\left({1\over\cosh x}\right)=f(t).
\ea\right.
\eeq
Finally, the $C^1$-regularity of $j$ and $\si$ ensure the isotropic realizability in the whole space $\RR^3$.
\par
Since $X_{32}$ is a homeomorphism onto $\RR^3$ of class $C^1$ and the function $w$ of \eqref{wsh} is in $C^1(\RR^3)$, we can also conclude  thanks to Remark~\ref{rem.homX32}.
\begin{Rem}\label{rem.jsh}
The previous example allows us to show that there exists an infinite set of admissible conductivities which cannot be derived from a multiple of the conductivity \eqref{sish} defined with the function \eqref{shf}. To this end, consider any function $f:\RR\setminus\{0\}\to\RR$ which is a $C^1$-diffeomorphism from $(0,\infty)$ onto $\RR$ and from $(-\infty,0)$ onto $\RR$, which satisfies the following asymptotic expansions around $0$:
\beq\label{expff'}
f(t)=\ln|t|+c+o(1)\quad\mbox{and}\quad f'(t)={1\over t}+c'+o(1),\quad\mbox{for some }c,c'\in\RR.
\eeq
Then, define the conductivity $\si_f$ by
\beq\label{sif}
\si_f(x,y,z):=\left\{\ba{lll}
2\,e^{c-y} & \mbox{if }x=0 &
\\ \ecart
\dis {\sinh x\over t} & \mbox{if }x\neq 0, & \dis \mbox{where}\quad x\,t>0,\ y-{\rm argtanh}\left({1\over\cosh x}\right)=f(t).
\ea\right.
\eeq
Thanks to \eqref{shf}, \eqref{shff'} and \eqref{expff'} the function $\si_f$ belongs to $C^1(\RR)$.
Moreover, we have for any $(x,y)\in\RR\setminus\{0\}\times\RR$,
\beq
\curl\big(\si_f^{-1}\,j\big)=\curl\begin{pmatrix} \dis {t\over\sinh x} \\ t \\ 0 \end{pmatrix}
=\begin{pmatrix} 0 \\ 0 \\ \dis {\partial t\over\partial x}-{1\over\sinh x}\,{\partial t\over\partial y} \end{pmatrix}.
\eeq
This combined with
\beq
-\,{\partial\over\partial x}\left[{\rm argtanh}\left({1\over\cosh x}\right)\right]={1\over \sinh x}={\partial t\over\partial x}\,f'(t)
\quad\mbox{and}\quad1={\partial t\over\partial y}\,f'(t),
\eeq
yields that $\curl(\si_f^{-1}j)=0$ in $\RR\setminus\{0\}\times\RR$. Since $\si_f$ is in $C^1(\RR)$, the equality holds in $\RR^2$.
Therefore, the field $j$ defined by \eqref{jsh} is isotropically realizable in $\RR^3$ with any conductivity $\si_f$ defined by \eqref{sif} and \eqref{expff'}.
\end{Rem}
\subsection{Example of non-global realizability under Frobenius' condition}\label{ss.cexF}
The following example shows that condition \eqref{j.curlj=0} is not sufficient to derive a global realizability result in accordance with the local character of Frobenius' theorem. This example is an extension to a divergence free non-vanishing field in $\RR^3$ of the counterexample of \cite{Boy} obtained for a non-vanishing field in $\RR^2$.
\par
Define the function $j$ in $\RR^3$ by
\beq\label{jcex}
j(x,y,z):=\begin{pmatrix} f(x) \\ f'(x) \\ -z\,f'(x)\end{pmatrix},\quad\mbox{where}\quad f(x):=8\,x^3-6\,x^4-1,
\eeq
which satisfies condition \eqref{divj=0}.
We have
\[
\curl j(x,y,z)=\begin{pmatrix} 0 \\ z\,f''(x) \\ f''(x) \end{pmatrix},
\]
so that Frobenius' condition \eqref{j.curlj=0} holds, but not \eqref{ortbasj} since $f''(\sqrt{2/3})=0$.
\par
Now, assume that $j$ can be written $\sigma\nabla u$ for some positive continuous function $\sigma$ in the closed strip $\{0\leq x\leq 1\}$.
We have for any $x\in(0,1)$, $f'(x)\neq 0$ and
\beq\label{fu}
\left\{\ba{l}
\dis f(x)=\sigma\,{\partial u\over\partial x}
\\ \ecart
\dis f'(x)=\sigma\,{\partial u\over\partial y},
\ea\right.
\quad\mbox{hence}\quad
{\partial u\over\partial x}={f(x)\over f'(x)}\,{\partial u\over\partial y}.
\eeq
Then, the method of characteristics implies that for a fixed $z\in\RR$, there exists a function $g_z$ in $C^1(\RR)$ such that
\beq\label{ugz}
u(x,y,z)=g_z\big(y+F(x)\big),\quad\forall\,(x,y)\in(0,1)\times\RR,
\eeq
where $F$ is the primitive of $f/f'$ on $(0,1)$ defined by
\beq\label{F}
F(x):={x^2\over 8}-{x\over 12}-{\ln(1-x)\over 24}-{\ln x\over 24}+{1\over 24x},\quad\mbox{for }x\in(0,1).
\eeq
As $x\to 0$, $x>0$, we have by \eqref{F}
\[
t:=y+F(x)\,\mathop{\sim}_{x\to 0}{1\over 24\,x}.
\]
This combined with \eqref{jcex}, \eqref{fu}, \eqref{ugz} yields
\[
g'_z(t)={1\over\sigma(x,y,z)}\,f'(x)\,\mathop{\sim}_{t\to\infty}\,{1\over 24\,\sigma(0,y,z)}\,{1\over t^2}.
\]
As $x\to 1$, $x>1$, we have by \eqref{F}
\[
t=y+F(x)=y+{1\over 12}-{\ln(1-x)\over 24}+o(1),\quad\mbox{hence}\quad 1-x \,\mathop{\sim}_{t\to\infty}\,e^{24y+2}\,e^{-24t},
\]
which implies that
\[
g'_z(t)={1\over\sigma(x,y,z)}\,f'(x)\,\mathop{\sim}_{t\to\infty}\,{24\,e^{24y+2}\over\sigma(1,y,z)}\,e^{-24t}.
\]
Therefore, the two asymptotic expansions of $g_z(t)$ as $t\to\infty$, lead to a contradiction.
\subsection{The case where the current field lies in a fixed plane}\label{ss.jplan}
Consider a periodic field $j$ in $C^2_\sharp(Y)^3$ satisfying \eqref{divj=0} and \eqref{j.curlj=0} which remains perpendicular to a fixed direction. By an orthogonal change of variables we may assume that $j$ lies in the plane $\{z=0\}$, namely $j=(j_x,j_y,0)$.
From now on, any vector of $\RR^2$ will be identified to a vector of~$\RR^3$ with zero $z$-coordinate.
Hence, we deduce that
\beq\label{j.curlj=02d}
j\cdot\curl j=0\;\Leftrightarrow\;\partial_z j\parallel j\quad\mbox{in }\RR^3.
\eeq
Then, using the representation of a divergence-free field by an orthogonal gradient in $\RR^2$, the most general expression for a divergence free field $j$ satisfying \eqref{j.curlj=02d} is
\beq\label{jF2d}
\ba{rl}
j(x,y,z)=\al(x,y,z)\,\nabla^\perp v(x,y)=\al(x,y,z)\begin{pmatrix} -\,\partial_y v(x,y) \\ \partial_x v(x,y) \\ 0 \end{pmatrix}
& \mbox{for }(x,y,z)\in\RR^3,
\\ \ecart
\mbox{with}\quad \partial_x\al\,\partial_y v-\partial_y\al\,\partial_x v=0 & \mbox{in }\RR^3,
\ea
\eeq
where $\al\in C^2_\sharp(Y')$, $\al>0$, and $v\in C^3(\RR^2)^2$, with $\nabla v$ $Y'$-periodic.
For the sake of simplicity, we assume from now on that the function $\al$ only depends on the coordinate $z$. Therefore, we are led to
\beq\label{jz=0}
j(x,y,z)=\al(z)\,\nabla^\perp v(x,y),\quad \mbox{for }(x,y,z)\in\RR^3,
\eeq
where $\al\in C^2_\sharp([0,1])$, $\al>0$, and $v\in C^3(\RR^2)^2$, with $\nabla v$ $Y'$-periodic and $\nabla v\neq 0$ in $\RR^2$ by \eqref{divj=0}.
\par
Following the isotropic realizability procedure for a gradient field \cite{BMT}, consider the gradient system
\beq\label{Z}
\left\{\ba{ll}
Z'(t,x,y)=\nabla v\big(Z(t,x,y)\big), & \mbox{for }t\in\RR,
\\ \ecart
Z(0,x,y)=(x,y)\in\RR^2. &
\ea\right.
\eeq
Then, by virtue of Theorem~\ref{thm.elerea} there exists a unique function $\tau_v\in C^1(\RR^2)$ such that
\beq\label{tauv}
v\big(Z(\tau_v(x,y),x,y)\big)=0,\quad\mbox{for any }(x,y)\in\RR^2,
\eeq
and the function $w_v$ defined by
\beq\label{wv}
w_v(x,y):=-\int_0^{\tau_v(x,y)}\De v\big(Z(s,x,y)\big)\,ds,\quad\mbox{for }(x,y)\in\RR^2,
\eeq
satisfies the conductivity equation $\div(e^{-w_v}\nabla v)=0$ in $\RR^2$.
\par
We have the following isotropic realizability result:
\begin{Pro}\label{pro.jz=0}
If the function $w_v$ of \eqref{wv} is in $L^\infty(\RR^2)$, then the field $j$ defined by \eqref{jz=0} is isotropically realizable in the torus. Conversely, if $j$ is isotropic realizable in the torus with a positive conductivity $\si\in C^1_\sharp(Y)$, then the function $w_v$ is in $L^\infty(\RR^2)$.
On the other hand, if $w_v$ belongs to $L^\infty(\RR^2)$, then condition \eqref{L2X3'} holds.
\end{Pro}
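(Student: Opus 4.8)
The plan is to reduce the whole statement to the two-dimensional gradient realizability of $\nabla v$ furnished by Theorem~\ref{thm.elerea}, using that $j$ in \eqref{jz=0} is, up to the positive scalar factor $\al(z)$, the orthogonal gradient $\nabla^\perp v$. The algebraic cornerstone is that $\div(e^{-w_v}\nabla v)=0$ (equivalently $\nabla w_v\cdot\nabla v=\Delta v$) is exactly the statement that the rotated field $e^{-w_v}\nabla^\perp v=(-e^{-w_v}v_y,\,e^{-w_v}v_x)$ is curl-free in $\RR^2$, hence a planar gradient $\nabla u_0$ with $u_0=u_0(x,y)$. For the sufficiency direction I would set
\[
w(x,y,z):=w_v(x,y)+\ln\al(z),
\]
so that $e^{-w}j=e^{-w_v}\al^{-1}\cdot\al\,\nabla^\perp v=e^{-w_v}\nabla^\perp v=\nabla u_0$ has zero curl in $\RR^3$. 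Since $\al\in C^2_\sharp([0,1])$ is positive, $\ln\al$ is bounded, so $w\in L^\infty(\RR^3)$ whenever $w_v\in L^\infty(\RR^2)$, and Proposition~\ref{pro.perrea} then delivers the isotropic realizability of $j$ in the torus.

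For the converse, suppose $j=\si\nabla u$ with $\si\in C^1_\sharp(Y)$, $\si>0$, and set $\mu:=\al/\si$, so that $\mu\,\nabla^\perp v=\nabla u$ is a gradient in $\RR^3$. Writing $\curl(\mu\,\nabla^\perp v)=0$ componentwise, the $z$-component is the planar equation $\div(\mu\nabla v)=0$, while the $x$- and $y$-components read $v_x\,\partial_z\mu=v_y\,\partial_z\mu=0$; since $\nabla v\neq 0$ everywhere, this forces $\partial_z\mu=0$, i.e. $\mu=\mu(x,y)$. I would then compare $\mu$ with $e^{-w_v}$ along the gradient flow $Z$ of \eqref{Z}: both conductivities satisfy $\frac{d}{dt}\ln(\cdot)(Z)=-\Delta v(Z)$, whence $\ln\mu+w_v$ is constant along each trajectory. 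Evaluating at the crossing time $\tau_v$, where $w_v=0$ on $\{v=0\}$, gives $w_v=\ln\mu(Z(\tau_v,\cdot))-\ln\mu$. Because $\mu=\al/\si$ is bounded above and below by positive constants (both $\al$ and $\si$ being positive and periodic, hence bounded with bounded inverse), we conclude $w_v\in L^\infty(\RR^2)$.

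For the last assertion I would compute directly that
\[
\curl j=-\al'\nabla v+\al\,\Delta v\,e_z,\qquad \frac{j\times\curl j}{|j|^2}=\frac{\Delta v}{|\nabla v|^2}\,\nabla v+\frac{\al'}{\al}\,e_z,
\]
so the flow $X_3$ of \eqref{Xi} decouples into an in-plane part $\xi$ driven by $\frac{\Delta v}{|\nabla v|^2}\nabla v$ and a one-dimensional part $\zeta$ solving $\zeta'=(\ln\al)'(\zeta)$. The integrand in \eqref{L2X3'} splits accordingly as $\frac{(\al')^2}{\al^2}(\zeta)+\frac{(\Delta v)^2}{|\nabla v|^2}(\xi)$. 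The first term integrates to the oscillation of $\ln\al$ along $\zeta$: since $\frac{d}{ds}\ln\al(\zeta)=(\zeta')^2\geq 0$ and $\ln\al(\zeta)$ is bounded and monotone, its integral is at most $\ln(\max\al/\min\al)$. The second term equals $\frac{d}{ds}w_v(\xi)$ by the identity $\nabla w_v\cdot\nabla v=\Delta v$, and $s\mapsto w_v(\xi(s))$ is nondecreasing and bounded once $w_v\in L^\infty$, so it integrates to $w_v(\xi(+\infty))-w_v(\xi(-\infty))\leq 2\|w_v\|_{L^\infty}$. Adding the two uniform bounds yields \eqref{L2X3'} with a constant independent of $x$.

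The main obstacle I anticipate is the converse direction, specifically controlling $w_v$ by $\ln\mu$: one must first extract $\partial_z\mu=0$ to have the planar conductivity equation $\div(\mu\nabla v)=0$ available, and then invoke the uniqueness-type identity along the gradient flow that ties the arbitrary admissible conductivity $\mu$ to the canonical one $e^{-w_v}$. By contrast, the computations behind the last assertion are routine once the decoupling $\frac{j\times\curl j}{|j|^2}=\frac{\Delta v}{|\nabla v|^2}\nabla v+\frac{\al'}{\al}e_z$ and the identity $\frac{d}{ds}w_v(\xi)=\frac{(\Delta v)^2}{|\nabla v|^2}(\xi)$ are in hand.
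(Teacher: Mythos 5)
Your proposal is correct and follows essentially the same route as the paper: all three parts reduce to the planar gradient system for $v$, and your decoupling of $X_3$ into the in-plane and vertical flows reproduces the paper's key identity \eqref{lenXjz=0} (your signs, based on $\nabla w_v\cdot\nabla v=\De v$, are in fact the consistent ones). The only cosmetic differences are that for sufficiency you feed $w=w_v+\ln\al$ into Proposition~\ref{pro.perrea} where the paper composes the periodic conductivity furnished by Theorem~\ref{thm.elerea} directly, and for the converse you re-derive the constancy of $\ln\mu+w_v$ along the gradient flow \eqref{Z} instead of simply citing the periodic part of Theorem~\ref{thm.elerea}.
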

\begin{Rem}\label{rem.curlj=0}
The criterion for the isotropic realizability of $j$ in the torus given in Proposition~\ref{pro.jz=0}, that is the boundedness of $w_v$, implies condition \eqref{L2X3'}. The converse is not clear. The reason is that the field $j$ defined by \eqref{jz=0} does not satisfy condition~\eqref{ortbasj}. More precisely, the curl of $j$
\beq\label{curljjz=0}
\curl j(x,y,z)=-\,\al'(z)\,\nabla v(x,y)+\al(z)\,\De v(x,y)\,e_z,\quad\mbox{for }(x,y,z)\in\RR^3,
\eeq
does vanish in $\RR^3$. Indeed, due to the periodicity of $\al$ and $\nabla v$ we have
\[
\int_0^1\al'(z)\,dz=\int_{Y'}\De v(x,y)\,dx dy=0.
\]
This combined with the continuity of $\al'$ and $\De v$ implies the existence of a point $(x,y,z)\in\RR^3$ such that $\al'(z)=\De v(x,y)=0$.
Therefore, we cannot use Theorem~\ref{thm.glorea}.
\end{Rem}
\noindent
{\bf Proof of Proposition~\ref{pro.jz=0}.}
Assume that the function $w_v$ of \eqref{wv} is in $L^\infty(\RR^2)$.
Then, by Theorem~\ref{thm.elerea} the gradient field $\nabla v$ is isotropically realizable in the torus, namely there exists a periodic positive conductivity $\si$, with $\si,\si^{-1}\in L^\infty_\sharp(Y')$, such that $\div(\si\nabla v)=0$ in~$\RR^2$.
Hence, there exists a stream function $u$, with $\nabla u\in L^2_\sharp(Y')^2$, such that $\si\nabla v=-\nabla^\perp u$ in $\RR^2$.
Therefore, by \eqref{jz=0} $j=\al\,\si^{-1}\nabla u$ is isotropically realizable in the torus.
\par
Conversely, assume that $j$ is isotropically realizable in the torus with a positive conductivity $\si\in C^1_\sharp(Y)$.
Then, since $j_z=0$, we have $j=\si\nabla u$ in $\RR^3$, with $\nabla u\in C^1_\sharp(Y')^2$.
Equating this equality with \eqref{jz=0} at $z=0$, we get that
\[
{\al(0)\over\si(x,y,0)}\,\nabla v(x,y)=-\nabla^\perp u(x,y),\quad\mbox{for }(x,y)\in\RR^2.
\]
Thus, $\nabla v$ is isotropically realizable in the torus with the conductivity $\al(0)\,\si^{-1}(\cdot,0)\in C^1_\sharp(Y')$, which by virtue of  Theorem~\ref{thm.elerea} implies that $w_v$ belongs to $L^\infty(\RR^2)$.
\par
Now, assume that $w_v$ is in $L^\infty(\RR^2)$. By \eqref{Xi}, \eqref{jz=0} and \eqref{curljjz=0} we have
\[
{\partial X_3\over\partial t}={j\times\curl j\over |j|^2}(X_3)={\De v(X_3)\over|\nabla v(X_3)|^2}\,\nabla v(X_3)+{\al'(X_3\cdot e_z)\over\al(X_3\cdot e_z)}\,e_z.
\]
Moreover, by \eqref{wv} we have $\div(e^{w_v}\nabla v)=0$ in $\RR^2$, or equivalently $\De v=-\nabla w_w\cdot\nabla v$ in $\RR^2$, hence
\beq\label{Xjv}
{\partial X_3\over\partial t}=-\Pi_{\nabla v}(\nabla w_v)(X_3)+{\al'(X_3\cdot e_z)\over\al(X_3\cdot e_z)}\,e_z,
\eeq
where $\Pi_{\nabla v}$ denotes  the orthogonal projection on $\RR\nabla v$.
However, since $\nabla w_v-\Pi_{\nabla v}(\nabla w_v)$ is parallel to $j$ by \eqref{jz=0} and thus orthogonal to $\partial_t X_3$ by \eqref{Xi}, we get that
\[
\ba{ll}
\dis \left|{\partial X_3\over\partial t}\right|^2 & 
\dis =-\Pi_{\nabla v}(\nabla w_v)(X_3)\cdot {\partial X_3\over\partial t}+{\al'(X_3\cdot e_z)\over\al(X_3\cdot e_z)}\,{\partial X_3\over\partial t}\cdot e_z
\\ \ecart
& \dis =-\nabla w_v(X_3)\cdot {\partial X_3\over\partial t}+{\al'(X_3\cdot e_z)\over\al(X_3\cdot e_z)}\,{\partial X_3\over\partial t}\cdot e_z
={\partial\over\partial t}\left[-\,w(X_3)+\ln\big(\al(X_3\cdot e_z)\big)\right]
\ea
\]
It follows that for any $(t,x,y,z)\in\RR^4$,
\beq\label{lenXjz=0}
\int_0^t\left|{\partial X_3\over\partial s}(s,x,y,z)\right|^2\,ds=w_v(x,y,z)-w_v\big(X_3(t,x,y,z)\big)
+\ln\left[{\al\big(X_3(t,x,y,z)\cdot e_z\big)\over\al(z)}\right].
\eeq
The function $\ln\al$ is periodic and continuous in $\RR$, hence it is bounded. Therefore, equality \eqref{lenXjz=0} shows that the boundedness of $w_v$ implies condition \eqref{L2X3'}.
\begin{Rem}\label{rem.jsh2}
Note that the field current $j$ defined by \eqref{jsh} has a zero $z$-coordinate, and
\beq
j=\nabla^\perp v\quad\mbox{with}\quad v(x,y):=\cosh x-y,
\eeq
so that we could {\em a priori} use the above method. However, in this case the solution of the gradient system \eqref{Z} is given by
\beq
Z(t,x,y)=\begin{pmatrix} \dis 2\,{\rm argtanh}\left(e^t\tanh\,(x/2)\right) \\ -\,t+y \end{pmatrix},
\eeq
which is clearly not a global solution. Therefore, the present two-dimensional approach does not work for the very simple current field \eqref{jsh}.
\end{Rem}
\subsection{A particular class of current fields}
Let $f,g,h\in C^2(\RR)$ be three $1$-periodic functions such that $f$ has only isolated roots in $\RR$ which are not roots of $f'$, while $g,h$ do not vanish in $\RR$.
Then, the following isotropic realizability result holds:
\begin{Pro}\label{pro.fgh}
The $Y$-periodic field $j$ defined by
\beq\label{jfgh}
j(x,y,z):=\begin{pmatrix} g(y)\,h(z) \\ f(x)\,h(z) \\ f(x)\,h(z) \end{pmatrix},\quad\mbox{for }(x,y,z)\in\RR^3,
\eeq
satisfies the conditions \eqref{divj=0} and \eqref{j.curlj=0}.
On the other hand, consider the following assertions:
\begin{itemize}
\item[$(i)$] the function $f$ does not vanish in $\RR^3$,
\item[$(ii)$] the field $j$ is isotropically realizable in the torus with a positive conductivity $\si\in C^1_\sharp(Y)$,
\item[$(iii)$] condition \eqref{L2X3'} holds.
\end{itemize}
Then, $(i)$ and $(ii)$ are equivalent conditions and they imply assertion $(iii)$. Moreover, when $g=h=1$ all three assertions $(i)$, $(ii)$ and $(iii)$ are equivalent.
\end{Pro}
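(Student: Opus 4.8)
The plan is to record the two structural identities and then establish the four implications $(i)\Rightarrow(ii)$, $(ii)\Rightarrow(i)$, $(i)\,\&\,(ii)\Rightarrow(iii)$, and (when $g=h=1$) $(iii)\Rightarrow(i)$ in turn. Writing $f=f(x)$, $g=g(y)$, $h=h(z)$, a direct computation gives $\div j=0$ together with
\[
\curl j=\big(f\,(g'-h'),\ g\,(h'-f'),\ h\,(f'-g')\big),
\]
whence $j\cdot\curl j=fgh\,[(g'-h')+(h'-f')+(f'-g')]=0$; the telescoping of this sum is precisely why Frobenius' condition \eqref{j.curlj=0} holds, and $\div j=0$ follows from the same cyclic structure.

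For $(i)\Rightarrow(ii)$ I would exhibit an explicit admissible conductivity. Since $g,h$ never vanish and, by $(i)$, neither does $f$, each of the three has constant sign, so $\si:=fgh$ (changed by a global sign if needed to be positive) lies in $C^1_\sharp(Y)$ together with $\si^{-1}$. A one-line check shows $\si^{-1}j=\pm(1/f,1/g,1/h)$, which is curl-free and equals $\nabla u$ for $u=F(x)+G(y)+H(z)$ with $F'=1/f$, $G'=1/g$, $H'=1/h$. Putting $w:=\ln\si\in L^\infty(\RR^3)$ one then has $\curl(e^{-w}j)=\curl(\nabla u)=0$, so Proposition~\ref{pro.perrea} yields the isotropic realizability in the torus.

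The crux is $(ii)\Rightarrow(i)$, which I would prove by contraposition using the periodicity of $\si$. Suppose $f(x_0)=0$; by hypothesis $f'(x_0)\neq0$. If $j=\si\nabla u$ with $\si\in C^1_\sharp(Y)$, then $w:=\ln\si$ is $Y$-periodic and $\curl j=\nabla w\times j$ by \eqref{curlj.j=0}. On the plane $\{x=x_0\}$ one has $j=(gh,0,0)$, so reading off the $e_y$-component of $\curl j=\nabla w\times j$ gives
\[
h(z)\,\partial_z w=h'(z)-f'(x_0)\qquad\mbox{on }\{x=x_0\}.
\]
Dividing by $h$ and integrating in $z$ over one period annihilates $\int_0^1\partial_z w\,dz$ (periodicity of $w$) and $\int_0^1 h'/h\,dz$ (periodicity of $h$), leaving $f'(x_0)\int_0^1 dz/h(z)=0$. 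Since $1/h$ has constant sign the last integral is nonzero, contradicting $f'(x_0)\neq0$; hence $f$ cannot vanish. This is the step I expect to be hardest, as it is the only place where the global periodicity of $\si$ must be turned into the pointwise conclusion $f\neq0$, via the averaging of the $\partial_z w$ identity and the definite sign of $1/h$.

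The implication $(i),(ii)\Rightarrow(iii)$ is then immediate: conditions \eqref{divj=0} and \eqref{j.curlj=0} hold, so Corollary~\ref{cor.perrea}$\,(i)$ applies to the conductivity furnished by $(ii)$ and yields \eqref{L2X3'}. Finally, for $g=h=1$ I would close the circle by proving $(iii)\Rightarrow(i)$, again contrapositively. Here $j=(1,f,f)$ and $j\times\curl j=(2ff',-f',-f')$; if $f(x_0)=0$ the $x$-component $2ff'$ vanishes on $\{x=x_0\}$, so that plane is invariant under the flow $X_3$ of \eqref{Xi}. Along any trajectory lying in it the $x$-coordinate stays equal to $x_0$, so the integrand reduces to the positive constant $|\curl j|^2/|j|^2=2f'(x_0)^2$ and the integral in \eqref{L2X3'} diverges. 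Hence \eqref{L2X3'} forces $f\neq0$, completing the equivalence; the only other delicate point is the invariance of $\{x=x_0\}$, which rests on the factor $f$ in the $x$-component of $j\times\curl j$.
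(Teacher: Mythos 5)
Your proof is correct, and on two of the four implications it takes a genuinely different route from the paper. (A preliminary remark: as displayed, \eqref{jfgh} has $j_z=f(x)h(z)$, which is not divergence free unless $h'\equiv 0$; both you and the paper's own proof implicitly work with the intended field $j=(g(y)h(z),\,h(z)f(x),\,f(x)g(y))$, for which your formula for $\curl j$ and the identity $\si^{-1}j=(1/f,1/g,1/h)$ are right.) The verifications of \eqref{divj=0}, \eqref{j.curlj=0}, the implication $(i)\Rightarrow(ii)$ via $\si=|fgh|$ and $u=F+G+H$, and $(ii)\Rightarrow(iii)$ via Corollary~\ref{cor.perrea}$\,(i)$ coincide with the paper. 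For $(ii)\Rightarrow(i)$ the paper integrates $\curl(\si^{-1}j)\cdot e_y$ over a slab $[a,b]\times Y'$ with $f(a)=0$, $f(b)\neq 0$, kills the lateral boundary terms by periodicity and reaches a sign contradiction on the face $\{x=b\}$; this only needs $\si$ continuous and $Y'$-periodic in $(y,z)$, so it proves a slightly stronger non-realizability statement. Your argument instead reads off the $e_y$-component of $\curl j=\nabla w\times j$ on the invariant plane $\{x=x_0\}$ and averages $\partial_z w$ over one period; it is shorter but uses both the $C^1$ regularity of $\si$ (to have the pointwise identity) and the standing hypothesis that roots of $f$ are not roots of $f'$, neither of which the paper's slab argument requires. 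For $(iii)\Rightarrow(i)$ when $g=h=1$, the paper starts from a point with $f(x)>0$, solves the $x$-equation of \eqref{Xf} explicitly and shows by a change of variables that the trajectory approaches the plane $\{x=a\}$ slowly enough that the integral diverges; you observe instead that $\{x=x_0\}$ is itself invariant for $X_3$ (since the $x$-component $2ff'$ of $j\times\curl j$ vanishes there and the right-hand side is $C^1$, so uniqueness applies) and that the integrand is the positive constant $2f'(x_0)^2$ along such trajectories, so \eqref{L2X3'} fails at once. Since \eqref{L2X3'} is required for \emph{all} $x\in\RR^3$, including points of that plane, your shortcut is legitimate and considerably simpler; the paper's computation has the added value of showing the divergence also for starting points off the degenerate plane.
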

\begin{proof}
Condition \eqref{divj=0} clearly holds. We have
\beq\label{curljfgh}
\curl j=\begin{pmatrix} f(x)\,\big(g'(y)-h'(z)\big) \\ g(y)\,\big(h'(z)-f'(x)\big) \\ h(z)\,\big(f'(x)-g'(y)\big) \end{pmatrix},\quad\mbox{for }(x,y,z)\in\RR^3,
\eeq
hence condition \eqref{j.curlj=0} is also satisfied. However, note that condition \eqref{diffX32} does not hold.
\par\ms\noindent
$(i)\Rightarrow(ii)$. If $f$ does not vanish in $\RR$, we can write
\beq\label{ufgh}
j(x,y,z)=f(x)\,g(y)\,h(z)\,\nabla u(x,y,z),\;\;\mbox{with}\;\;u(x,y,z):=\int_0^x{dt\over f(t)}+\int_0^y{dt\over g(t)}+\int_0^z{dt\over h(t)}.
\eeq
Therefore, $j$ is isotropically realizable in the torus with the conductivity
\[
\si(x,y,z):=\big|f(x)\,g(y)\,h(z)\big|>0,
\]
which belongs to $C^1_\sharp(Y)$.
\par\ms\noindent
$(ii)\Rightarrow(i)$. More precisely, we will prove that if $f$ vanishes in $\RR$, then $j$ is not isotropically realizable with any positive continuous conductivity $\si(x,y,z)$ which is $Y'$-periodic with respect to $(y,z)$.
\par
To this end, assume by contradiction that both $f$ vanishes in $\RR$ and $j$ is isotropically realizable with a positive continuous conductivity $\si(x,y,z)$ which is $Y'$-periodic with respect to $(y,z)$, $Y'=[0,1]^2$. Let $a<b$ be such that $f(a)=0$ and $f(b)\neq 0$. Starting from the equality $\curl(\si^{-1} j)=0$, integrating by parts over the cube $[a,b]\times Y'$, and denoting by $n$ the outside normal, we have
\beq\label{int=0}
\ba{ll}
0 & \dis =\int_{[a,b]\times Y'}\curl(\si^{-1} j)\cdot e_y\,dx\,dy\,dz=\int_{\partial([a,b]\times Y')}(n\times \si^{-1}j)\cdot e_y\,ds
\\ \ecart
& \dis =\int_{\{a,b\}\times Y'}(n\times\si^{-1}j)\cdot e_y\,ds+\int_{[a,b]\times\partial Y'}(n\times \si^{-1}j)\cdot e_y\,ds.
\ea
\eeq
The integral over $[a,b]\times\partial Y'$ in \eqref{int=0} is equal to zero due to the $Y'$-periodicity of $\si^{-1}j$ with respect to $(y,z)$. Hence, using that $f(a)=0$ and $(e_x\times j)\cdot e_y=-j_z=-f(x)\,g(y)$, it follows that the integral over $\{a,b\}\times Y'$ in \eqref{int=0} satisfies
\beq
0=-\int_{Y'}f(b)\,g(y)\,\si^{-1}(b,y,z)\,dy\,dz.
\eeq
This leads to a contradiction, since the function $(y,z)\mapsto f(b)\,g(y)\,\si^{-1}(b,y,z)$ is continuous and has a constant sign in $Y'$.
\par\ms\noindent
$(ii)\Rightarrow(iii)$. This is a straightforward consequence of Corollary~\ref{cor.perrea} $i)$.
\par\ms\noindent
{\it $(iii)\Rightarrow(i)$, when $g=h=1$.} Assume by contradiction that $f$ vanishes at some point $a\in\RR$. Since $f'(a)\neq 0$, we may assume that, for instance, there exists a real number $b>a$ such that $f>0$ in $(a,b]$ and $f'>0$ in $[a,b]$.
\par
By \eqref{jfgh} and \eqref{curljfgh} we have
\[
j\times \curl j=\begin{pmatrix} 2f(x)f'(x) \\ -f'(x) \\ -f'(x) \end{pmatrix},\quad\mbox{for }x\in\RR,
\]
hence the flow $X_3$ of \eqref{Xi} reads as
\beq\label{Xf}
\left\{\ba{ll}
\dis x'(t)={2f(x(t))f'(x(t))\over 2f^2(x(t))+1}, & x(0)=x
\\ \ecart
\dis y'(t)=-\,{f'(x(t))\over 2f^2(x(t))+1}, & y(0)=y
\\ \ecart
\dis z'(t)=-\,{f'(x(t))\over 2f^2(x(t))+1}, & z(0)=z,
\ea\right.
\quad\mbox{for }(x,y,z)\in\RR^3.
\eeq
Define the function $F$ in $(a,b]$ by
\[
F(x):=\int_b^x{2f^2(s)+1\over 2f(s)f'(s)}\,ds,\quad\mbox{for }x\in(a,b].
\]
The function $F$ is an increasing bijection from $(a,b]$ onto $(-\infty,0]$, and the solution of the first equation of \eqref{Xf} is given by
\[
x(t)=F^{-1}\big(t+F(x)\big),\quad\mbox{for }t\leq 0,
\]
where $F^{-1}$ denotes the reciprocal of $F$.
Making the change of variables $r=x(s)$, we have for any $t\leq 0$,
\[
\int_0^t \big(y'(s)\big)^2\,ds=\int_x^{x(t)}\left({f'(r)\over 2f^2(r)+1}\right)^2{2f^2(r)+1\over 2f(r)f'(r)}\,dr
=\int_x^{x(t)}{f'(r)\over 2f(r)\,\big(2f^2(r)+1\big)}\,dr.
\]
Then, since $x(t)$ tends to $a$ as $t\to-\infty$ and $\dis f(r)\mathop{\sim}_{a}f'(a)\,(r-a)$, we get that
\beq
\int_{-\infty}^0\left|{\partial X_3\over\partial s}(s,x,y,z)\right|^2\,ds\geq-\lim_{t\to-\infty}\int_0^t \big(y'(s)\big)^2\,ds=-\int_x^{a}{f'(r)\over 2f(r)\,\big(2f^2(r)+1\big)}\,dr=\infty.
\eeq
Therefore, the $L^2(\RR)$-norm of $t\mapsto\partial_t X_3(t,x,y,z)$ is infinite for any $x\in(a,b]$.
This proves the implication $(iii)\Rightarrow (i)$, when $g=h=1$.

\end{proof}
\begin{Exa}\label{rem.f}
Consider the particular case of \eqref{jfgh} where $g=h=1$. When the function $f$ vanishes in $\RR$, the field $j\in C^2_\sharp(Y)^3$ still satisfies conditions \eqref{divj=0} and \eqref{j.curlj=0}. Since condition \eqref{ortbasj} does not hold, Theorem~\ref{thm.glorea} does not apply. However, the field $j$ is actually isotropically realizable in the whole space $\RR^3$, but not in the torus due to Proposition~\ref{pro.fgh}.
\par
It is not obvious how to derive an explicit conductivity associated with $j$, but we now proceed to do so.
To this end, we may assume that $f\in C^2(\RR)$ is a $1$-periodic function satisfying $f(0)=0$, $f'(0)\neq 0$, and $f>0$ in $(0,1)$. Then, $j$ lies in the plane $\{j_y=j_z\}$, so that we can apply the procedure of Section~\ref{ss.jplan} based on the representation of two-dimensional divergence-free functions by orthogonal gradients. This combined with the approach of \cite{BMT} (Proposition~2.11) allows us to construct a conductivity $\si\in C^1(\RR^3)$ such that $\curl(\si^{-1}j)=0$ in $\RR^3$, as follows:
\par
Let $F$ be the function defined in $(0,1)$ by
\beq\label{Ff}
F(x):=\int_{{1\over 2}}^x{ds\over f(s)},\quad\mbox{for }x\in (0,1).
\eeq
The function $F$ is a $C^1$-diffeomorphism from $(0,1)$ onto $\RR$. Then, denoting by $F^{-1}$ its reciprocal, an admissible conductivity $\si$ is given by
\beq\label{sifF}
\si(x,y,z):=\left\{\ba{cl}
\dis {f(x)\over f\big(F^{-1}(y+z+F(x-n))\big)} & \mbox{if }x\in(n,n+1)
\\ \ecart
e^{-f'(0)(y+z)} & \mbox{if }x=n,
\ea\right.
\quad\mbox{for }n\in\ZZ,
\eeq
which is $1$-periodic with respect to $x$.
Let us prove that $\si\in C^1(\RR^3)$, and $\curl(\si^{-1}j)=0$ in $\RR^3$.
\par
For $x\in(0,1)$ and $(y,z)\in\RR^2$, set $t:=F^{-1}(y+z+F(x))\in(0,1)$. Since $f'(0)\neq 0$ and $f\in C^2(\RR)$, we have for $n=0,1$,
\beq
\ba{ll}
\dis \left|\,y+z-{1\over f'(0)}\,\ln\left({t-n\over x-n}\right)\right| & \dis =\left|\,F(t)-F(x)-{1\over f'(0)}\,\ln\left({t-n\over x-n}\right)\right|
\\ \ecart
& \dis =\left|\,\int_x^t\left({1\over f(s)}-{1\over f'(0)(s-n)}\right)ds\,\right|\leq C\,|t-x|,
\ea
\eeq
which implies that
\beq\label{limtx}
\lim_{x\to n}\,{f(t)\over f(x)}=\lim_{x\to n}\,{t-n\over x-n}=e^{f'(0)(y+z)},\quad\mbox{for }n=0,1.
\eeq
This shows the continuity of the function $\si$. Moreover, we have for any $x\in(0,1)$,
\beq\label{Dsif}
{\partial\si\over\partial x}={f'(x)-f'(t)\over f(t)}\quad\mbox{and}\quad{\partial\si\over\partial y}={\partial\si\over\partial z}=-\,{f(x)\,f'(t)\over f(t)},
\eeq
which together with \eqref{limtx} implies that $\nabla\si$ has finite limits as $x\to 0$ or $1$.
Therefore, the function $\si$ belongs to $C^1(\RR^3)$.
\par
Set $w:=\ln\si$. By \eqref{sifF} and \eqref{Dsif} we have for any $x\in(0,1)$,
\[
{\partial w\over\partial x}={1\over\si}\,{\partial\si\over\partial x}={f'(x)-f'(t)\over f(x)}\quad\mbox{and}\quad
{\partial w\over\partial y}={\partial w\over\partial z}={1\over\si}\,{\partial\si\over\partial y}=-f'(t),
\]
which implies that $w$ solves the equations
\beq\label{eqwf}
f(x)\,{\partial w\over\partial x}-f'(x)={\partial w\over\partial y}={\partial w\over\partial z}\quad\mbox{in }\RR^3.
\eeq
By \eqref{jfgh} and \eqref{curljfgh} with $g=h\equiv 1$, equations \eqref{eqwf} lead to the equation
\[
\curl j-\nabla w\times j=0\quad\mbox{in }\RR^3,
\]
or equivalently, $\curl(e^{-w}j)=0$ in $\RR^3$.
\par
Therefore, the current field $j=\big(1,f(x),f(x)\big)$ is isotropically realizable with the conductivity $\si\in C^1(\RR^3)$ defined by \eqref{Ff} and \eqref{sifF}. Note that the function $\si$ is $1$-periodic with respect to $x$, but is not periodic with respect to $(y,z)$ in accordance with Proposition~\ref{pro.fgh}.
Finally, the isotropic realizability of $j$ in $\RR^3$ can be written
\[
\begin{pmatrix} 1 \\ f(x) \\ f(x) \end{pmatrix}=
\sigma(x,y,z)\,\nabla\left(x+\int_0^{y+z}f\big(F^{-1}(s+F(x-n))\big)\,ds\right),\quad\mbox{for }x\in(n,n+1),\ n\in\ZZ,
\]
where the conductivity $\si$ is defined by \eqref{sifF}.
\end{Exa}
\par\bs\noindent
{\bf Acknowledgments.} GWM thanks the {\em National Science Foundation} for support through grant DMS-1211359. Also the authors thank Andrejs Treibergs for his comments.
\end{document}